\newcommand{\nc}{\newcommand}
\newcommand{\delete}[1]{}	
\nc{\mlabel}[1]{\label{#1}}  
\nc{\mcite}[1]{\cite{#1}}  
\nc{\mref}[1]{\ref{#1}}  
\nc{\meqref}[1]{\eqref{#1}} %
\nc{\mbibitem}[1]{\bibitem{#1}} 
\nc{\mlabel}[1]{\label{#1}  
{\hfill \hspace{1cm}{\small\tt{{\ }\hfill(#1)}}}}
\nc{\mcite}[1]{\cite{#1}{\small{\tt{{\ }(#1)}}}}  
\nc{\mref}[1]{\ref{#1}{{\tt{{\ }(#1)}}}}  
\nc{\meqref}[1]{\eqref{#1}{{\tt{{\ }(#1)}}}}  
\nc{\mbibitem}[1]{\bibitem[\bf #1]{#1}} 
\newtheorem{theorem}{Theorem}[section]
\newtheorem{prop}[theorem]{Proposition}
\theoremstyle{definition}
\newtheorem{defn}[theorem]{Definition}
\newtheorem{exam}[theorem]{Example}
\nc{\tred}[1]{\textcolor{red}{#1}}
\nc{\tblue}[1]{\textcolor{blue}{#1}}
\nc{\tgreen}[1]{\textcolor{green}{#1}}
\nc{\tpurple}[1]{\textcolor{purple}{#1}}
\nc{\btred}[1]{\textcolor{red}{\bf #1}}
\nc{\btblue}[1]{\textcolor{blue}{\bf #1}}
\nc{\btgreen}[1]{\textcolor{green}{\bf #1}}
\nc{\btpurple}[1]{\textcolor{purple}{\bf #1}}
\nc{\NN}{{\mathbb N}}
\nc{\ncsha}{{\mbox{\cyr X}^{\mathrm NC}}} \nc{\ncshao}{{\mbox{\cyr
X}^{\mathrm NC}_0}}
\newcommand{\efootnote}[1]{}
\renewcommand{\textbf}[1]{}
\nc{\opa}{\ast} \nc{\opb}{\odot} \nc{\op}{\bullet} \nc{\pa}{\frakL}
\nc{\arr}{\rightarrow} \nc{\lu}[1]{(#1)} \nc{\mult}{\mrm{mult}}
\nc{\diff}{\mathfrak{Diff}}
\nc{\opc}{\sharp}\nc{\opd}{\natural}
\nc{\ope}{\circ}
\nc{\dpt}{\mathrm{d}}
\nc{\tforall}{\text{ for all }}
\nc{\diam}{alternating\xspace}
\nc{\Diam}{Alternating\xspace}
\nc{\cdiam}{alternating\xspace}
\nc{\Cdiam}{Alternating\xspace}
\nc{\AW}{\mathcal{A}}
\nc{\ari}{\mathrm{ar}}
\nc{\lef}{\mathrm{lef}}
\nc{\Sh}{\mathrm{ST}}
\nc{\Cr}{\mathrm{Cr}}
\nc{\st}{{Schr\"oder tree}\xspace}
\nc{\sts}{{Schr\"oder trees}\xspace}
\nc{\vertset}{\Omega} 
\nc{\assop}{\quad \begin{picture}(5,5)(0,0)
\line(-1,1){10}
\put(-2.2,-2.2){$\bullet$}
\line(0,-1){10}\line(1,1){10}
\end{picture} \quad \smallskip}
\nc{\operator}{\begin{picture}(5,5)(0,0)
\line(0,-1){6}
\put(-2.6,-1.8){$\bullet$}
\line(0,1){9}
\end{picture}}
\nc{\idx}{\begin{picture}(6,6)(-3,-3)
\put(0,0){\line(0,1){6}}
\put(0,0){\line(0,-1){6}}
 \end{picture}}
\nc{\pb}{{\mathrm{pb}}}
\nc{\Lf}{{\mathrm{Lf}}}
\nc{\lft}{{left tree}\xspace}
\nc{\lfts}{{left trees}\xspace}
\nc{\fat}{{fundamental averaging tree}\xspace}
\nc{\fats}{{fundamental averaging trees}\xspace}
\nc{\avt}{\mathrm{Avt}}
\nc{\rass}{{\mathit{RAss}}}
\nc{\aass}{{\mathit{AAss}}}
\nc{\vin}{{\mathrm Vin}}    
\nc{\lin}{{\mathrm Lin}}    
\nc{\inv}{\mathrm{I}n}
\nc{\gensp}{V} 
\nc{\genbas}{\mathcal{V}} 
\nc{\bvp}{V_P}     
\nc{\gop}{{\,\omega\,}}     
\nc{\bin}[2]{ (_{\stackrel{\scs{#1}}{\scs{#2}}})}  
\nc{\binc}[2]{ \left (\!\! \begin{array}{c} \scs{#1}\\
    \scs{#2} \end{array}\!\! \right )}  
\nc{\bincc}[2]{  \left ( {\scs{#1} \atop
    \vspace{-1cm}\scs{#2}} \right )}  
\nc{\bs}{\bar{S}} \nc{\cosum}{\sqsubset} \nc{\la}{\longrightarrow}
\nc{\rar}{\rightarrow} \nc{\dar}{\downarrow} \nc{\dprod}{**}
\nc{\dap}[1]{\downarrow \rlap{$\scriptstyle{#1}$}}
\nc{\md}{\mathrm{dth}} \nc{\uap}[1]{\uparrow
\rlap{$\scriptstyle{#1}$}} \nc{\defeq}{\stackrel{\rm def}{=}}
\nc{\disp}[1]{\displaystyle{#1}} \nc{\dotcup}{\
\displaystyle{\bigcup^\bullet}\ } \nc{\gzeta}{\bar{\zeta}}
\nc{\hcm}{\ \hat{,}\ } \nc{\hts}{\hat{\otimes}}
\nc{\barot}{{\otimes}} \nc{\free}[1]{\bar{#1}}
\nc{\uni}[1]{\tilde{#1}} \nc{\hcirc}{\hat{\circ}} \nc{\lleft}{[}
\nc{\lright}{]} \nc{\lc}{\lfloor} \nc{\rc}{\rfloor}
\nc{\curlyl}{\left \{ \begin{array}{c} {} \\ {} \end{array}
    \right .  \!\!\!\!\!\!\!}
\nc{\curlyr}{ \!\!\!\!\!\!\!
    \left . \begin{array}{c} {} \\ {} \end{array}
    \right \} }
\nc{\longmid}{\left | \begin{array}{c} {} \\ {} \end{array}
    \right . \!\!\!\!\!\!\!}
\nc{\onetree}{\bullet} \nc{\ora}[1]{\stackrel{#1}{\rar}}
\nc{\ola}[1]{\stackrel{#1}{\la}}
\nc{\ot}{\otimes} \nc{\mot}{{{\boxtimes\,}}}
\nc{\otm}{\overline{\boxtimes}} \nc{\sprod}{\bullet}
\nc{\scs}[1]{\scriptstyle{#1}} \nc{\mrm}[1]{{\rm #1}}
\nc{\margin}[1]{\marginpar{\rm #1}}   
\nc{\dirlim}{\displaystyle{\lim_{\longrightarrow}}\,}
\nc{\invlim}{\displaystyle{\lim_{\longleftarrow}}\,}
\nc{\mvp}{\vspace{0.3cm}} \nc{\tk}{^{(k)}} \nc{\tp}{^\prime}
\nc{\ttp}{^{\prime\prime}} \nc{\svp}{\vspace{2cm}}
\nc{\vp}{\vspace{8cm}} \nc{\proofbegin}{\noindent{\bf Proof: }}
\nc{\proofend}{$\blacksquare$ \vspace{0.3cm}}
\nc{\modg}[1]{\!<\!\!{#1}\!\!>}
\nc{\intg}[1]{F_C(#1)} \nc{\lmodg}{\!
<\!\!} \nc{\rmodg}{\!\!>\!}
\nc{\cpi}{\widehat{\Pi}}
\nc{\sha}{{\mbox{\cyr X}}}  
\nc{\shap}{{\mbox{\cyrs X}}} 
\nc{\shan}{{\overrightarrow \sha}}
\nc{\shpr}{\diamond}    
\nc{\shp}{\ast} \nc{\shplus}{\shpr^+}
\nc{\shprc}{\shpr_c}    
\nc{\msh}{\ast} \nc{\zprod}{m_0} \nc{\oprod}{m_1}
\nc{\vep}{\varepsilon} \nc{\labs}{\mid\!} \nc{\rabs}{\!\mid}
\nc{\sqmon}[1]{\langle #1\rangle}
\nc{\mmbox}[1]{\mbox{\ #1\ }} \nc{\dep}{\mrm{dep}} \nc{\fp}{\mrm{FP}}
\nc{\rchar}{\mrm{char}} \nc{\End}{\mrm{End}} \nc{\Fil}{\mrm{Fil}}
\nc{\Mor}{Mor\xspace} \nc{\gmzvs}{gMZV\xspace}
\nc{\gmzv}{gMZV\xspace} \nc{\mzv}{MZV\xspace}
\nc{\mzvs}{MZVs\xspace} \nc{\Hom}{\mrm{Hom}} \nc{\id}{\mrm{id}}
\nc{\im}{\mrm{im}} \nc{\incl}{\mrm{incl}} \nc{\map}{\mrm{Map}}
\nc{\mchar}{\rm char} \nc{\nz}{\rm NZ} \nc{\supp}{\mathrm Supp}
\nc{\Alg}{\mathbf{Alg}} \nc{\Bax}{\mathbf{Bax}} \nc{\bff}{\mathbf f}
\nc{\bfk}{{\bf k}} \nc{\bfone}{{\bf 1}} \nc{\bfx}{\mathbf x}
\nc{\bfy}{\mathbf y}
\nc{\base}[1]{\bfone^{\otimes ({#1}+1)}} 
\nc{\Cat}{\mathbf{Cat}}
\nc{\detail}{\marginpar{\bf More detail}
    \noindent{\bf Need more detail!}
    \svp}
\nc{\Int}{\mathbf{Int}} \nc{\Mon}{\mathbf{Mon}}
\nc{\rbtm}{{shuffle }} \nc{\rbto}{{Rota-Baxter }}
\nc{\remarks}{\noindent{\bf Remarks: }} \nc{\Rings}{\mathbf{Rings}}
\nc{\Sets}{\mathbf{Sets}} \nc{\wtot}{\widetilde{\odot}}
\nc{\wast}{\widetilde{\ast}} \nc{\bodot}{\bar{\odot}}
\nc{\bast}{\bar{\ast}} \nc{\hodot}[1]{\odot^{#1}}
\nc{\hast}[1]{\ast^{#1}} \nc{\mal}{\mathcal{O}}
\nc{\tet}{\tilde{\ast}} \nc{\teot}{\tilde{\odot}}
\nc{\oex}{\overline{x}} \nc{\oey}{\overline{y}}
\nc{\oez}{\overline{z}} \nc{\oef}{\overline{f}}
\nc{\oea}{\overline{a}} \nc{\oeb}{\overline{b}}
\nc{\weast}[1]{\widetilde{\ast}^{#1}}
\nc{\weodot}[1]{\widetilde{\odot}^{#1}} \nc{\hstar}[1]{\star^{#1}}
\nc{\lae}{\langle} \nc{\rae}{\rangle}
\nc{\lf}{\lfloor}
\nc{\rf}{\rfloor}
\nc{\QQ}{{\mathbb Q}}
\nc{\RR}{{\mathbb R}} \nc{\ZZ}{{\mathbb Z}}
\nc{\cala}{{\mathcal A}} \nc{\calb}{{\mathcal B}}
\nc{\calc}{{\mathcal C}}
\nc{\cald}{{\mathcal D}} \nc{\cale}{{\mathcal E}}
\nc{\calf}{{\mathcal F}} \nc{\calg}{{\mathcal G}}
\nc{\calh}{{\mathcal H}} \nc{\cali}{{\mathcal I}}
\nc{\call}{{\mathcal L}} \nc{\calm}{{\mathcal M}}
\nc{\caln}{{\mathcal N}}\nc{\calo}{{\mathcal O}}
\nc{\calp}{{\mathcal P}} \nc{\calr}{{\mathcal R}}
\nc{\cals}{{\mathcal S}} \nc{\calt}{{\mathcal T}}
\nc{\calu}{{\mathcal U}} \nc{\calw}{{\mathcal W}} \nc{\calk}{{\mathcal K}}
\nc{\calx}{{\mathcal X}} \nc{\CA}{\mathcal{A}}
\nc{\fraka}{{\mathfrak a}} \nc{\frakA}{{\mathfrak A}}
\nc{\frakb}{{\mathfrak b}} \nc{\frakB}{{\mathfrak B}}
\nc{\frakD}{{\mathfrak D}} \nc{\frakF}{\mathfrak{F}}
\nc{\frakf}{{\mathfrak f}} \nc{\frakg}{{\mathfrak g}}
\nc{\frakH}{{\mathfrak H}} \nc{\frakL}{{\mathfrak L}}
\nc{\frakM}{{\mathfrak M}} \nc{\bfrakM}{\overline{\frakM}}
\nc{\frakm}{{\mathfrak m}} \nc{\frakP}{{\mathfrak P}}
\nc{\frakN}{{\mathfrak N}} \nc{\frakp}{{\mathfrak p}}
\nc{\frakS}{{\mathfrak S}} \nc{\frakT}{\mathfrak{T}}
\nc{\frakX}{{\mathfrak X}} \nc{\frakx}{\mathfrak{x}}
\nc{\BS}{\mathbb{S
}}
\font\cyr=wncyr10 \font\cyrs=wncyr7
\nc{\li}[1]{\textcolor{red}{#1}}
\nc{\lir}[1]{\textcolor{red}{Li:#1}}
\nc{\sz}[1]{\textcolor{blue}{SZ: #1}}
\nc{\qhz}[1]{\textcolor{green}{Huizhen: #1}}
\nc{\ID}{\mathfrak{I}} \nc{\lbar}[1]{\overline{#1}}
\nc{\bre}{{\rm b}} \nc{\sd}{\cals} \nc{\rb}{\rm RB}
\nc{\A}{\rm angularly decorated\xspace} \nc{\LL}{\rm L}
\nc{\w}{\rm wid} \nc{\arro}[1]{#1}
\nc{\ver}{\rm ver}
\nc{\FN}{F_{\mathrm N}}
\nc{\FNA}{\FN(A)} \nc{\NA}{N_{A}}
\nc{\dr}{\diamond_r}
\nc{\dia}{\diamond}
\nc{\shar}{{\mbox{\cyrs X}}_r} 
\nc{\dt}{\Delta_T}
\nc{\da}{\Delta_A}
\nc{\vt}{\vep_T }
\nc{\bul}{\bullet}
\nc{\fraku}{{\mathfrak U}}
\nc{\frakc}{{\mathfrak C}}
\nc{\bp}{\bar{P}}
\nc{\qrba}{quasi-idempotent Rota-Baxter algebra\xspace}
\nc{\qrbo}{quasi-idempotent Rota-Baxter operator\xspace}
\nc{\rba}{Rota-Baxter algebra\xspace}
\nc{\rbo}{Rota-Baxter operator\xspace}
\nc{\bd}{\bar{\diamond}}
\nc{\Id}{\mathrm{Id}}
\nc{\Irr}{\mathrm{Irr}}
\nc{\brea}{\mrm{bre}}
\nc{\bro}{\mathrm{br\omega}}
\nc{\dgo}{\mathrm{dg\omega}}
\nc{\dgy}{\mathrm{dgy}}
\nc{\db}{\mathrm{cdb}}
\nc{\dlex}{\mathrm{dlex}}
\nc{\dao}{\overline{A^{\NN}}}
\nc{\rp}{{\rm p}}
\nc{\rd}{{\rm d}}
\begin{document}

\title[Construction of free quasi-idempotent differential Rota-Baxter algebras by GS bases]{Construction of free quasi-idempotent differential Rota-Baxter algebras by Gr\"obner-Shirshov bases}
%
\author{Huizhen Qiu}
\address{School of Mathematics and Statistics, Jiangxi Normal University, Nanchang, Jiangxi 330022, China}
\email{1197147595@qq.com}

\author{Shanghua Zheng}
\address{School of Mathematics and Statistics, Jiangxi Provincial Center for Applied Mathematics, Jiangxi Normal University, Nanchang, Jiangxi 330022, China}
\email{zhengsh@jxnu.edu.cn}

\author{Yangfan Dan}
\address{School of Mathematics and Statistics, Jiangxi Normal University, Nanchang, Jiangxi 330022, China}
\email{1732882275@qq.com}

\date{\today}
\begin{abstract}Differential operators and integral operators are linked together by the first fundamental theorem of calculus. Based on this principle, the notion of a differential Rota-Baxter algebra was proposed by Guo and Keigher from an algebraic abstraction point of view. Recently, the subject has attracted more attention since it is associated with many areas in mathematics, such as integro-differential algebras. This paper considers differential algebras, Rota-Baxter algebras and differential Rota-Baxter algebras in the quasi-idempotent operator context. We establish a Gr\"obner-Shirshov basis for free commutative quasi-idempotent differential algebras (resp. Rota-Baxter algebras, resp. differential Rota-Baxter algebras).  This provides a linear basis of  free object in each of the three corresponding categories by Composition-Diamond lemma.
\end{abstract}

\subjclass[2010]{
13A99, 
16W99, 
13P10, 
08B20, 
12H05 
}

\keywords{Rota-Baxter algebra; Differential algebra; Differential Rota-Baxter algebra; Gr\"obner-Shirshov basis; Quasi-idempotent operator.}

\maketitle

\tableofcontents

\setcounter{section}{0}

\allowdisplaybreaks

\section{Introduction}
The aim of this paper is to give linear bases of free commutative quasi-idempotent differential algebras
(resp. Rota-Baxter algebras, resp. differential Rota-Baxter algebras) by means of  Gr\"obner-Shirshov bases.

\subsection{Differential Rota-Baxter algebras}
A {\bf differential operator of weight $\lambda$} is a linear operator $d$ on a unitary $\bfk$-algebra $R$ such that
\begin{equation}\mlabel{eq:gle}
d(xy)=d(x)y+xd(y)+\lambda d(x)d(y),\quad\forall x,y \in R
\end{equation}
and
\begin{equation}\mlabel{eq:done}
d(1)=0.
\end{equation}
When $\lambda=0$, it becomes the classical differential operator, which is essential in analysis and its algebraic generalizations led to the development of differential algebra in 1950s~\cite{Ritt}. Over the next few decades, the theory was completely developed, including differential Galois theory and differential algebraic groups~\cite{Kol,Magid}.

A {\bf Rota-Baxter operator of weight $\lambda$} is a linear operator $P$ on $R$ such that
\begin{equation}\mlabel{eq:rb}
  P(x)P(y)=P(xP(y))+P(P(x)y)+\lambda P(xy),\quad\forall x,y\in R.
\end{equation}
When $\lambda=0$, a Rota-Baxter operator is just the integral operator in calculus. The Rota-Baxter algebra was originated from the probability study of G. Baxter in 1960~\cite{Ba}, and was developed further by many outstanding mathematicians such as Atkinson,  Cartier and Rota~\cite{At,Ca,Ro,Ro2} later.  In the early 1980s,
Rota-Baxter operators in the context of Lie algebras were discovered by Belavin and Drinfeld in~\cite{BD82}, and Semenov-Tian-Shansky in~\cite{STS} as the operator form of the classical Yang-Baxter equation.
Since then, the study of associative Rota-Baxter algebras has undergone an extraordinary renascence in the application of mathematics and mathematical physics, such as Yang-Baxter equations~\cite{Bai,BBGN}, shuffle products~\cite{GK00,Hof00}, combinatorics~\cite{Ca,Ro}, Lie groups~\cite{GLS21,LS22} and Hopf algebras~\cite{EG06,YGT}. Most significantly, there exists the Connes-Kreimer's algebraic approach to the renormalization in perturbative quantum field theory~\cite{CK00}. For further details on Rota-Baxter algebras, see~\cite{Gub}.

A differential Rota-Baxter algebra of weight $\lambda$ (see Definition~\mref{defn:qidrb}) contains both a differential operator of weight $\lambda$ and a Rota-Baxter operator of weight $\lambda$ that are related in the same way as differential operators and  integral operators. In 2008, Guo and Keigher introduced the differential Rota-Baxter algebra of weight $\lambda$, and obtained explicit constructions of free commutative differential Rota-Baxter algebras by mixable shuffles and  free noncommutative differential Rota-Baxter algebras by angularly decorated rooted trees~\cite{GK08}. In fact, the explicit constructions of free commutative and noncommutative Rota-Baxter algebras could be obtained by means of the same method~\cite{EG081,EG08,Gub}. It is also worth mentioning that a differential Rota-Baxter algebras includes a integro-differential algebra as a special case, which was proposed due to the algebraic study of boundary problems for linear ordinary differential equations~\cite{GRR14}. As an essential algebraic structure in differential Rota-Baxter algebras, the ring of differential Rota-Baxter operators analogous to the ring of differential operators,  was studied in~\cite{GGR18}.  The algebra of integro-differential operators was also developed in~\cite{Bav11}. In 2016,  from a categorical point of view, Zhang, Guo and Keigher studied monads on differential algebras and comonads on Rota-Baxter algebras, providing the mixed distributive law to differential Rota-Baxter algebras~\cite{ZGK16}.

\subsection{Gr\"obner-Shirshov bases for free $\Omega$-operated algebras}
The work of Shirshov~\cite{Shir} raised a crucial question of Lie algebras: how to find out a linear basis of a Lie algebra presented by generators and relations?
The method of Gr\"obner-Shirshov (aka. GS) bases provides an infinite algorithm to solve this question faultlessly. Now it has become a  very popular tool to study the constructions of free objects in a wide range of algebraic structures, such as semigroups, algebras and operads. For further details of GS bases, see the survey~\cite{BC14}.

In particular, the GS bases  theory for free commutative and noncommutative $\Omega$-operated algebras has been successfully established in~\cite{BCQ10,Qiu14}. Later, they were used to construct the free commutative and noncommutative differentia Rota-Baxter algebras of weight $\lambda$. Furthermore,  GS bases were applied to the explicit construction of free object in the category of differential type algebras, Rota-Baxter type algebras, integro-differential algebras and Lie differential Rota-Baxter algebras~\cite{BCD,GGZ14,GGM15,LG21,QC10,QC171,QQWZ21}.

This paper is devoted to refine some aspects of the above works in the quasi-idempotent operators context by employing the general method of GS bases.

\subsection{Quasi-idempotent operators and outline of the paper}There are intimate relations among quasi-idempotent operators, Rota-Baxter algebras and Hopf algebras.
Jian~\cite{Jian17} constructed the Rota-Baxter operator by using quasi-idempotent elements. Based on this result, it follows that each finite dimensional Hopf algebra possesses a Rota-Baxter algebra structure. Dually, Ma, Li and Yang~\cite{MLY21} proved that every finite dimensional Hopf algebra  contains a Rota-Baxter coalgebra (bialgebra) by the same way. Recently, Zheng, Guo and Zhang~\cite{ZGZ20} characterized a quasi-idempotent operator by a generic Rota Baxter paired modules of weight $\lambda$.

On the other hand, Aguiar and  Moreira~\cite{AM} gave the explicit construction of free noncommutative quasi-idempotent Rota-Baxter algebras on one generator by using angularly decorated rooted trees. More importantly,  the free tridendriform (resp. dendriform) algebra on one generator can be embedded into the free quasi-idempotent Rota-Baxter algebras (resp. with an idempotent generator).

Motivated by a differential operator being a left inverse of a Rota-Baxter operator, it is desirable to introduce a notion of a quasi-idempotent differential algebra and consider the free object in the corresponding category. Consequently differential operators, Rota-Baxter operators and quasi-idempotent operators combine together  to provide a quasi-idempotent differential Rota-Baxter algebra. We shall gives an answer to the Shirshov's question of free quasi-idempotent differential Rota-Baxter algebras by GS bases, with emphasis on the commutative case.

This is our main motivation of this paper.

The outline of this paper is as follows.
Section~\mref{sec:bsp} gives some properties and examples of quasi-idempotent differential Rota-Baxter algebras.  Section~\mref{sec:foa} recalls  the explicit  construction of free commutative $\Omega$-operated algebras on a set, and establishes the Composition-Diamond lemma for free commutative $\Omega$-operated algebras. A monomial order on free commutative $\Omega$-operated monoids is given for GS bases for free commutative quasi-idempotent differential Rota-Baxter algebras. With these results in hand,
Section~\mref{sec:fcqa} first obtains a GS basis for free commutative quasi-idempotent differential (resp. Rota-Baxter) algebras of weight $\lambda$.  Based on this,  we supply a  GS basis for free commutative quasi-idempotent differential Rota-Baxter algebras of weight $\lambda$.  As a consequence,  a linear basis of the free commutative quasi-idempotent differential algebras (resp. Rota-Baxter algebras, resp. differential Rota-Baxter algebras) is obtained.

\medskip\noindent
{\bf Convention.} Throughout this paper, $\bfk$ is taken to be a field of characteristic $0$. All algebras and linear maps are taken over $\bfk$, unless the contrary is specified. By an algebra we mean an associative unitary \bfk-algebra.

\section{Basic properties and examples}
\mlabel{sec:bsp}
We first introduce notions of a quasi-idempotent differential algebra, a quasi-idempotent Rota-Baxter algebra and a quasi-idempotent differential Rota-Baxter algebra. Then some examples of each of these three algebras are given, respectively.

\begin{defn}\mlabel{defn:qidrb}
Let $R$ be an algebra. Let $0\neq \lambda \in \mathbf{k}$ be fixed.
\begin{enumerate}
\item A linear operator $P: R\to R$ is called a {\bf quasi-idempotent operator of weight $\lambda$} on $R$ if $P^2=-\lambda P$.
\item A nonzero element $\xi \in R $ is called a {\bf quasi-idempotent element of weight $\lambda$} if $\xi^2=-\lambda \xi$.
\item A {\bf quasi-idempotent differential algebra of weight $\lambda$} is an algebra $R$ together with a quasi-idempotent operator $d$ of weight  $\lambda^{-1}$ (that is, $d^2=-\lambda^{-1} d$) on $R$ satisfying Eqs.~(\mref{eq:gle}) and~(\mref{eq:done}).
Then $d$ is called a {\bf quasi-idempotent differential operator of weight $\lambda$}.
\item A {\bf quasi-idempotent Rota-Baxter algebra of weight $\lambda$} is an algebra $R$ together with a quasi-idempotent operator $P$ of weight  $\lambda$ on $R$ satisfying Eq.~(\mref{eq:rb}).
Then $P$ is called a {\bf quasi-idempotent Rota-Baxter operator of weight $\lambda$}.
\item A {\bf quasi-idempotent differential Rota-Baxter algebra of weight $\lambda$} is an algebra $R$ together with a quasi-idempotent differential operator $d$ of weight $\lambda$ and a quasi-idempotent Rota-Baxter operator $P$ of weight $\lambda$ such that
\begin{equation}
  d\circ P=\id_R.
\end{equation}

\item
If a quasi-idempotent operator $d$ of weight $\lambda^{-1}$ exactly satisfies  Eq.~(\mref{eq:gle}), then $d$ is called a {\bf weak quasi-idempotent differential operator of weight $\lambda$}. Particularly, a weak quasi-idempotent differential operator of weight $\lambda$ with $d(1)\neq 0$ is called a {\bf degenerate quasi-idempotent differential operator of weight $\lambda$}.
\end{enumerate}
\end{defn}
For simplicity,  we will also use the notion of a quasi-idempotent operator (resp. element, resp. algebra)  interchangeably with the notion a quasi-idempotent operator (resp. element, resp. algebra) of weight $\lambda$.
We next give some properties of quasi-idempotent Rota-Baxter algebras.
\begin{prop}\mlabel{prop:tilp}
Let $P$ be a \qrbo of weight $\lambda$. Then $\tilde{P}:=-\lambda \id-P$ is also a \qrbo of weight of $\lambda$.
\end{prop}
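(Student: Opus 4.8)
The plan is to verify directly that $\tilde{P} := -\lambda\id - P$ satisfies both defining conditions of a \qrbo of weight $\lambda$: the quasi-idempotent identity $\tilde{P}^2 = -\lambda\tilde{P}$ of Definition~\mref{defn:qidrb}(1), and the Rota-Baxter identity~\meqref{eq:rb}. A convenient feature is that these two verifications decouple cleanly: the first uses only the hypothesis $P^2 = -\lambda P$, while the second uses only that $P$ itself satisfies~\meqref{eq:rb}.

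First I would dispatch the quasi-idempotent identity by a short expansion. We have $\tilde{P}^2 = (-\lambda\id - P)^2 = \lambda^2\id + 2\lambda P + P^2$, and substituting $P^2 = -\lambda P$ collapses this to $\lambda^2\id + \lambda P$, which is exactly $-\lambda\tilde{P}$. This step is immediate and requires no further input.

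For the Rota-Baxter identity, I would substitute $\tilde{P}(x) = -\lambda x - P(x)$ into both sides of~\meqref{eq:rb} and expand by linearity. The left side $\tilde{P}(x)\tilde{P}(y)$ yields $\lambda^2 xy + \lambda xP(y) + \lambda P(x)y + P(x)P(y)$. On the right side, each of $\tilde{P}(x\tilde{P}(y))$, $\tilde{P}(\tilde{P}(x)y)$ and $\lambda\tilde{P}(xy)$ expands into terms of the shapes $\lambda^2 xy$, $\lambda P(xy)$, $P(xP(y))$, $P(P(x)y)$, together with the surviving $\lambda xP(y)$ and $\lambda P(x)y$. Collecting coefficients, the $\lambda^2 xy$ contributions sum to a single $\lambda^2 xy$, the $\lambda P(xy)$ contributions sum to a single $\lambda P(xy)$, and one is left with the right side equal to $\lambda^2 xy + \lambda xP(y) + \lambda P(x)y + [P(xP(y)) + P(P(x)y) + \lambda P(xy)]$. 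Applying~\meqref{eq:rb} for $P$ rewrites the bracketed expression as $P(x)P(y)$, and the right side then matches the left side term by term.

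The argument presents no conceptual obstacle; the only care needed is the bookkeeping of the several terms produced in the Rota-Baxter expansion, where signs from the two copies of $-\lambda\id$ must be tracked accurately. It is worth noting that the preservation of~\meqref{eq:rb} under the substitution $P \mapsto -\lambda\id - P$ is the classical fact that this involution acts on Rota-Baxter operators of weight $\lambda$; the only genuinely new point is that the quasi-idempotent condition is preserved as well, which the first step settles outright.
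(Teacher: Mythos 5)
Your proof is correct and follows essentially the same route as the paper: the quasi-idempotent identity is verified by the identical expansion $\tilde{P}^2=\lambda^2\id+2\lambda P+P^2=-\lambda\tilde{P}$. The only difference is that the paper delegates the Rota--Baxter half to the classical fact \cite[Proposition~1.1.12]{Gub} that $P\mapsto-\lambda\id-P$ preserves the Rota--Baxter identity, whereas you expand that verification explicitly (and correctly); as you yourself note, this part is standard.
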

\begin{proof}By \cite[Proposition~1.1.12]{Gub}, $\tilde{P}$ is  a \rbo of weight $\lambda$.
By $P^2(x)=-\lambda P(x)$,
$$\tilde{P}^2(x)=\lambda ^2 x+ 2\lambda P(x)+ P^2(x)=(-\lambda) (-\lambda \id-P)(x)=-\lambda \tilde{P}(x).$$
Thus $\tilde{P}$ is a quasi-idempotent operator of weight $\lambda$.
\end{proof}
\begin{prop}Let $P$ be a linear operator on an algebra $R$. Then $P$  is a \qrbo of nonzero weight $\lambda$ if and only if there exists a  vector space direct sum decomposition
$$R=R_1\oplus R_2$$
of $R$ into $\bfk$-subalgebras $R_1$ and $R_2$ of  $R$ such that for all $a:=a_1+a_2$ with $ a_1\in R_1, a_2\in R_2$,
$$P:R\to R_1, \,a\mapsto -\lambda a_1.$$
\end{prop}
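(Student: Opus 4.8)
The plan is to derive both implications from one observation: since $\lambda\neq 0$, the quasi-idempotence relation $P^2=-\lambda P$ is equivalent to saying that $Q:=-\lambda^{-1}P$ is an idempotent linear operator, $Q^2=Q$.

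For the forward implication, I would first use the standard fact that an idempotent $Q$ induces a vector space direct sum $R=\im Q\oplus\ker Q$ on which $Q$ acts as the projection onto the first summand along the second. Setting $R_1:=\im Q=\im P$ and $R_2:=\ker Q=\ker P$ and writing $a=a_1+a_2$ accordingly gives $Q(a)=a_1$, hence $P(a)=-\lambda a_1$, which is exactly the asserted formula. It then remains to check that $R_1$ and $R_2$ are closed under multiplication. Here I would exploit that every $u\in R_1$ is fixed by $Q$, so $P(u)=-\lambda u$, whereas every $s\in R_2$ satisfies $P(s)=0$. Substituting $u,v\in R_1$ into \eqref{eq:rb} collapses the three terms on the right to $-\lambda P(uv)$ and the left to $\lambda^2 uv$, so $uv=-\lambda^{-1}P(uv)\in\im P=R_1$; substituting $s,t\in R_2$ into \eqref{eq:rb} gives $0=\lambda P(st)$, whence $P(st)=0$ and $st\in\ker P=R_2$. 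This step is where the Rota-Baxter axiom is genuinely used; quasi-idempotence alone only supplies the projection.

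For the converse, given the decomposition and the rule $P(a)=-\lambda a_1$, I note that $P=-\lambda\pi_1$ where $\pi_1\colon R\to R_1$ is the (linear) projection, so $P$ is linear; and since $-\lambda a_1\in R_1$, applying $P$ again yields $P^2(a)=\lambda^2 a_1=-\lambda P(a)$, establishing quasi-idempotence. The substance is to verify \eqref{eq:rb}. Writing $x=x_1+x_2$ and $y=y_1+y_2$, the left-hand side equals $P(x)P(y)=\lambda^2 x_1y_1$. On the right-hand side I would expand each product and apply $P=-\lambda\pi_1$, using that $x_1y_1\in R_1$ and $x_2y_2\in R_2$ since $R_1,R_2$ are subalgebras.

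I expect the main obstacle to be exactly this final verification: the cross terms $x_1y_2$ and $x_2y_1$ need not lie in either summand, so their $R_1$-components enter the individual quantities $P(xP(y))$, $P(P(x)y)$ and $\lambda P(xy)$. The decisive point, which I would settle by tracking only the $R_1$-components through the direct sum, is that these unknown contributions cancel in the sum $P(xP(y))+P(P(x)y)+\lambda P(xy)$, leaving precisely $\lambda^2 x_1y_1$ and matching the left-hand side. Thus no extra hypothesis on the cross products is required, and \eqref{eq:rb} holds, completing the converse.
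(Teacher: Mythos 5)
Your proposal is correct, and it arrives at the same decomposition as the paper ($R_1=\im P$ and $R_2=\ker P$, the latter coinciding with the image of $\tilde P=-\lambda\id-P$ used in the paper), but it fills in the two key verifications that the paper instead delegates to Guo's book. In the forward direction the paper obtains the subalgebra property of $R_1$ and $R_2$ by citing the fact that images of Rota--Baxter operators are subalgebras, whereas you derive it directly by substituting $u,v\in R_1$ (where $P(u)=-\lambda u$) and $s,t\in R_2$ (where $P(s)=0$) into the Rota--Baxter identity~\eqref{eq:rb}; this is a clean, self-contained argument and correctly isolates where the Rota--Baxter axiom, as opposed to mere quasi-idempotence, is actually needed. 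In the converse direction the paper simply invokes ``the proof of \cite[Theorem~1.1.14]{Gub}'' for the Rota--Baxter identity, while you carry out the expansion and observe that the $R_1$-components of the cross terms $x_1y_2$ and $x_2y_1$ cancel among $P(xP(y))$, $P(P(x)y)$ and $\lambda P(xy)$, leaving $\lambda^2x_1y_1$; this cancellation is exactly the content of the cited result, and your computation checks out. The net effect is that your route is more elementary and self-contained, at the cost of a slightly longer write-up; the paper's route is shorter but leans on external results.
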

\begin{proof}Suppose that $P$ is a quasi-idempotent Rota-Baxter operator of nonzero weight $\lambda$. Then by Proposition~\mref{prop:tilp}, $\tilde{P}=-\lambda\id-P$ is also a \qrbo of weight $\lambda$.   Let $R_1:=P(R)$ and let $R_2=\tilde{P}(R)$.
Then by \cite[Proposition~1.1.12]{Gub}, $R_1$ and $R_2$ are subalgebras of $R$. We next verify that $R=R_1\oplus R_2$. Firstly, we have
$$a=-\lambda^{-1}P( a)+(-\lambda^{-1})(-\lambda a-P(a)).$$
Thus $R=R_1+R_2$.  Furthermore, we let $x\in R_1\cap R_2$. Then for some $r_1,r_2\in R$,
$$x=P(r_1)=-\lambda r_2-P(r_2).$$
Thus
$$x=-\lambda^{-1}P^2(r_1)=(-\lambda^{-1})P(-\lambda r_2-P(r_2))=P(r_2)-P(r_2)=0.$$
Thus $R=R_1\oplus R_2$.  Assume that $a=a_1+a_2$ for some $a_1\in R_1$ and $a_2\in R_2$. Since $a=-\lambda^{-1}P( a)+(-\lambda^{-1})(-\lambda a-P(a))$, we know that $P(a)=-\lambda a_1$.

Conversely, by direct computation, we get
$$P^2(a)=P(-\lambda a_1)=\lambda^2 a_1=-\lambda P(a).$$
From the proof of \cite[Theorem~1.1.14]{Gub}, we obtain that $P$ is a \rbo of weight  $\lambda$. Thus $P$  is a \qrbo of nonzero weight $\lambda$.
\end{proof}
The intimate relationship between quasi-idempotent Rota-Baxter algebras and Nijenhuis algebras is presented as below.

\begin{defn}A {\bf Nijenhuis algebra} is a pair $(R,P)$ consisting of an algebra $R$ and a linear operator $P$ on $R$ satisfying the {\bf Nijenhuis identity}
\begin{equation}
P(x)P(y)=P(xP(y))+P(P(x)y)-P^2(xy),\quad\forall x,y\in R.
\mlabel{eq:nij}
\end{equation}
\end{defn}

\begin{prop}Let $(R,P)$ be a \qrba of weight $\lambda$. Then $(R,P)$ is a Nijenhuis algebra.
\end{prop}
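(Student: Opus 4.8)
The plan is to show that, for a quasi-idempotent Rota-Baxter operator, the defining Rota-Baxter identity and the Nijenhuis identity coincide once the quasi-idempotent relation is invoked. Since $(R,P)$ is a \qrba of weight $\lambda$, the operator $P$ satisfies both the Rota-Baxter identity~\meqref{eq:rb} and the quasi-idempotent condition $P^2=-\lambda P$. I would start from~\meqref{eq:rb}, namely
$$P(x)P(y)=P(xP(y))+P(P(x)y)+\lambda P(xy),\quad\forall x,y\in R.$$

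The key step is to observe that the weight term $\lambda P(xy)$ can be rewritten by means of $P^2=-\lambda P$: applying this relation to the element $xy$ yields $P^2(xy)=-\lambda P(xy)$, equivalently $\lambda P(xy)=-P^2(xy)$. Substituting this equality into the right-hand side of the Rota-Baxter identity turns it directly into
$$P(x)P(y)=P(xP(y))+P(P(x)y)-P^2(xy),$$
which is precisely the Nijenhuis identity~\meqref{eq:nij}. Hence $(R,P)$ is a Nijenhuis algebra.

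There is no genuine obstacle here; the argument is a one-line substitution valid for all $x,y\in R$. The only point worth emphasizing is the conceptual one: the quasi-idempotent hypothesis is exactly what converts the linear weight term $\lambda P(xy)$ attached to a Rota-Baxter operator into the quadratic term $-P^2(xy)$ appearing in the Nijenhuis identity. Consequently every quasi-idempotent Rota-Baxter operator of weight $\lambda$ is automatically a Nijenhuis operator, and this is the full content of the proposition.
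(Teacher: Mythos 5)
Your proof is correct and is essentially identical to the paper's: both start from the Rota-Baxter identity~\meqref{eq:rb} and use $P^2(xy)=-\lambda P(xy)$ to replace the weight term $\lambda P(xy)$ by $-P^2(xy)$, yielding the Nijenhuis identity~\meqref{eq:nij}.
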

\begin{proof}
Since $P$ is a \rbo of weight $\lambda$, we have
$P(x)P(y)=P(xP(y))+P(P(x)y)+\lambda P(xy)$.
Then by $P^2(xy)=-\lambda P(xy)$,  we get
$P(x)P(y)=P(xP(y))+P(P(x)y)-P^2(xy)$,
proving the statement.
\end{proof}
But the converse of the above proposition is false.  For example,
by \cite[Example~2.3]{Ebr14},  the left  multiplication operator $L_a: R\to R, x\mapsto ax$ is a Nijenhuis operator.
If $a^2\neq -\lambda a$,  then $L_a^2(x)=a^2x\neq -\lambda ax=-\lambda L_a(x)$ for some $x\in R$. Thus $L_a$ is not quasi-idempotent, and so is not a \qrbo of weight $\lambda$.

Next we consider some examples of quasi-idempotent differential Rota-Baxter algebras. By~\cite[Proposition~2.10]{GK08}, there is a classical differential Rota-Baxter algebra structure on $\lambda$-Hurwitz series. It is desirable to consider the quasi-idempotent case. For this, we recall the notion of $\lambda$-Hurwitz series.
Let $A$ be a $\bfk$-algebra.  Denote
$$A^{\NN}:=\{f:\NN\to A\}:=\{(f(n)):=(f(0),f(1),f(2),\cdots,f(n),\cdots)\,|\,f(n)\in A,\,n\geq 0\}.$$
Addition and scalar multiplication on $A^{\NN}$ are defined by: for all $f,g\in A^{\NN}$,
\begin{eqnarray*}
&&f+g=((f+g)(n)),\quad \text{where}\quad (f+g)(n)=f(n)+g(n)\\
&&kf=((kf)(n)), \,\quad \text{where}\quad (kf)(n)=kf(n).
\end{eqnarray*}
The product on $A^\NN$ is defined by:
\begin{equation*}
fg=((fg)(n)),\quad\text{where}\quad(fg)(n)=\sum_{k=0}^n\sum_{j=0}^{n-k}{n\choose k}{n-k\choose j}\lambda^k f(n-j)g(k+j).
\end{equation*}
Then $A^\NN$ becomes a unitary commutative $\bfk$-algebra, called the {\bf $\lambda$-Hurwitz series algebra over $A$} or simply the {\bf $\lambda$-Hurwitz series ring over $A$}, also denoted by $A^\NN$. The identity $1$ of $A^\NN$ is given by $1(0)=1_A$ and $1(n)=0$ if $n>0$.
Define a linear map $$\partial_{A}:A^\NN\to A^\NN, \quad (\partial_{A}(f))(n)=f(n+1), \quad n\in \NN,\,f\in A^\NN.$$
Then by \cite[Proposition 2.7]{GK08}, $(A^\NN,\partial_A)$ is a differential algebra of weight $\lambda$.
If $0\neq \lambda\in\bfk$, we  set
$$ \overline{A^\NN}=\{f:\NN\to A \mid  f(n)=-\lambda^{-1} f(n-1), n\geq 1\}\subseteq A^\NN.$$
Thus we can write $f\in \overline{A^\NN}$ as
\begin{equation*}
f=(f(0),-\lambda^{-1}f(0),(-\lambda^{-1})^2f(0),\cdots (-\lambda^{-1})^{n}f(0),\cdots).
\end{equation*}
Denote by $\bar{\partial}_A$  the restriction of $\partial_A$ to $\overline{A^\NN}$.
A direct computation shows that $\dao$ together with $\bar{\partial}_A$ forms a quasi-idempotent differential subalgebra of $A^{\NN}$.
Furthermore, we  define a linear map
$$\bar{\pi}_{A}:\dao\to \dao, \quad (\bar{\pi}_{A}(f))(0)=-\lambda f(0), \quad (\bar{\pi}_{A}(f))(n)=f(n-1),\, n\geq 1,\quad f\in \dao.$$
\begin{prop}
The triple $(\dao,\bar{\partial}_{A},\bar{\pi}_{A})$ is a quasi-idempotent differential Rota-Baxter algebra of weight $\lambda$.
\end{prop}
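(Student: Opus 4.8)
The plan is to exploit the very rigid structure of $\dao$: every element $f\in\dao$ is completely determined by $f(0)$ through $f(n)=(-\lambda^{-1})^n f(0)$, and the key claim is that on $\dao$ \emph{both} operators are scalar multiples of the identity. Concretely, I would first show that $\bar{\partial}_A=-\lambda^{-1}\id$ and $\bar{\pi}_A=-\lambda\id$ as operators on $\dao$; once this is established, each of the defining conditions of a quasi-idempotent differential Rota-Baxter algebra (Definition~\mref{defn:qidrb}) collapses to an elementary scalar identity in $\bfk$, so that the Rota-Baxter and Leibniz relations no longer require any manipulation of the Hurwitz product.

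For the two scalar identifications I would argue directly from the defining relation $f(n)=-\lambda^{-1}f(n-1)$ of $\dao$. For $\bar{\partial}_A$, one has $(\bar{\partial}_A f)(n)=f(n+1)=-\lambda^{-1}f(n)$ for every $n\geq 0$, whence $\bar{\partial}_A f=-\lambda^{-1}f$. For $\bar{\pi}_A$, the same relation rewritten as $f(n-1)=-\lambda f(n)$ gives $(\bar{\pi}_A f)(n)=f(n-1)=-\lambda f(n)$ for $n\geq 1$, while $(\bar{\pi}_A f)(0)=-\lambda f(0)$ by definition; hence $\bar{\pi}_A f=-\lambda f$. These computations simultaneously show that both operators map $\dao$ into $\dao$. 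Granting from the preceding paragraph that $\dao$ is a commutative subalgebra of $A^\NN$ and that $(\dao,\bar{\partial}_A)$ is a quasi-idempotent differential subalgebra, it remains only to verify the operator conditions. Since products of elements of $\dao$ lie again in $\dao$, the Rota-Baxter relation \eqref{eq:rb} for $\bar{\pi}_A=-\lambda\id$ reduces to $\lambda^2 fg=\lambda^2 fg+\lambda^2 fg-\lambda^2 fg$, which holds; the quasi-idempotency conditions become $(-\lambda)^2=-\lambda(-\lambda)$ and $(-\lambda^{-1})^2=-\lambda^{-1}(-\lambda^{-1})$; the compatibility $\bar{\partial}_A\circ\bar{\pi}_A=\id$ is just $(-\lambda^{-1})(-\lambda)=1$; and the Leibniz rule \eqref{eq:gle}, inherited from the differential structure already recorded, reduces to $-\lambda^{-1}=-\lambda^{-1}-\lambda^{-1}+\lambda\lambda^{-2}$.

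The only genuinely computational point, and the one I expect to be the main obstacle, is the closure of $\dao$ under the Hurwitz product, i.e. the assertion that $\dao$ is a subalgebra, used tacitly above. If this must be proved from scratch rather than quoted, the crucial observation is that for $f,g\in\dao$ the summand $f(n-j)g(k+j)=(-\lambda^{-1})^{n+k}f(0)g(0)$ is independent of the inner index $j$, so the inner sum contributes a factor $\sum_{j=0}^{n-k}\binom{n-k}{j}=2^{n-k}$ and the outer sum telescopes through the binomial identity $\sum_{k=0}^{n}\binom{n}{k}(-1)^{k}2^{n-k}=(2-1)^{n}=1$. This yields the clean formula $(fg)(n)=(-\lambda^{-1})^{n}f(0)g(0)$, which at once shows $fg\in\dao$ and, as a bonus, that $f\mapsto f(0)$ is an algebra isomorphism $\dao\cong A$ under which $\bar{\partial}_A$ and $\bar{\pi}_A$ become multiplication by $-\lambda^{-1}$ and $-\lambda$; this isomorphism is an equally good route to the whole proposition.
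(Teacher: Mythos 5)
Your proof is correct, and for the core of the proposition --- the Rota--Baxter identity for $\bar{\pi}_A$ --- it takes a genuinely different route from the paper's. The paper does record the identity $\bar{\pi}_A(f)=-\lambda f$ (its Eq.~\eqref{eq:pif}), but it does not exploit it to collapse the verification; instead it uses the standard Hurwitz-series device: it sets $h_0:=\bar{\pi}_{A}(f)\bar{\pi}_{A}(g)-\bar{\pi}_{A}(\bar{\pi}_{A}(f) g)-\bar{\pi}_{A}(f\bar{\pi}_{A}(g))-\lambda \bar{\pi}_{A}(fg)$, shows $\bar{\partial}_A(h_0)=0$ using the weight-$\lambda$ Leibniz rule together with $\bar{\partial}_A\circ\bar{\pi}_A=\id$, concludes that $h_0$ vanishes in all degrees $n\geq 1$, and then checks $h_0(0)=0$ by a separate computation. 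Your reduction to scalar identities via $\bar{\partial}_A=-\lambda^{-1}\id$ and $\bar{\pi}_A=-\lambda\id$ on $\dao$ is shorter and more transparent, and your binomial computation $(fg)(n)=(-\lambda^{-1})^n f(0)g(0)$ supplies exactly the ``direct computation'' the paper omits when it asserts that $\dao$ is a subalgebra of $A^{\NN}$ --- a closure fact that \emph{both} proofs need, since the paper must know that $\bar{\pi}_A(f)g$, $f\bar{\pi}_A(g)$ and $fg$ lie in the domain of $\bar{\pi}_A$. What the paper's method buys is robustness: the derivation trick works for Hurwitz series in general, without the rigid one-dimensionality of $\dao$. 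What your method buys is insight: the isomorphism $f\mapsto f(0)$ identifies $(\dao,\bar{\partial}_A,\bar{\pi}_A)$ with $(A,d_\lambda,P_\lambda)$ of Examples~\ref{ex:dl} and~\ref{ex:dll}, which incidentally makes visible that $\bar{\partial}_A$ sends the internal unit $e=(1_A,-\lambda^{-1}1_A,\dots)$ of $\dao$ to $-\lambda^{-1}e\neq 0$, so condition~\eqref{eq:done} holds only relative to the unit of the ambient $A^{\NN}$ (which does not lie in $\dao$); this degeneracy is a feature of the proposition as stated, not a gap in your argument.
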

\begin{proof}
For all $f\in \dao$, we have
\begin{equation}
\bar{\pi}_A(f)=(-\lambda f(0),f(0),-\lambda^{-1}f(0),\cdots,(-\lambda^{-1})^{n-1}f(0),\cdots)=-\lambda f,
\mlabel{eq:pif}
\end{equation}
 proving $\bar{\pi}_A(f)$ is in $\dao$.
Then
$$\bar{\pi}^2_A(f)=(\lambda^2f(0),-\lambda f(0),f(0),-\lambda^{-1}f(0),\cdots,(-\lambda^{-1})^{n-2}f(0),\cdots)=\lambda^2 f,$$
Thus $\bar{\pi}^2_A(f)=-\lambda\bar{\pi}_A(f)$, proving that $\bar{\pi}_A$ is a quasi-idempotent operator of weight $\lambda$. In addition, we have
$$((\bar{\partial}_A \circ \bar{\pi}_{A})(f))(0)=(\bar{\partial}_A(-\lambda f))(0)=(-\lambda f)(1)=f(0),$$
and
$$((\bar{\partial}_A \circ \bar{\pi}_{A})(f))(n)=(\bar{\partial}_{A}(\bar{\pi}_{A}(f)))(n)=(\bar{\partial}_A(f))(n-1)=f(n), \,n\geq 1.$$
This gives $\bar{\partial}_A \circ \bar{\pi}_{A}=\id_{\dao}$.  Then it remains to verify that $\bar{\pi}_{A}$ is a Rota-Baxter operator of weight $\lambda$ on $\dao$.
Define
\begin{eqnarray*}
h_0:=\bar{\pi}_{A}(f)\bar{\pi}_{A}(g)-\bar{\pi}_{A}(\bar{\pi}_{A}(f) g)-\bar{\pi}_{A}(f\bar{\pi}_{A}(g))-\lambda \bar{\pi}_{A}(fg).
\end{eqnarray*}
Then
\begin{eqnarray*}
\bar{\partial}_A(h_0)&=&\bar{\partial}_A\Big(\bar{\pi}_{A}(f)\bar{\pi}_{A}(g)-\bar{\pi}_{A}(\bar{\pi}_{A}(f)g)-\bar{\pi}_{A}(f\bar{\pi}_{A}(g))-\lambda \bar{\pi}_{A}(fg)\Big)\\
&=&\bar{\partial}_A\Big(\bar{\pi}_{A}(f)\bar{\pi}_{A}(g)\Big)-\bar{\partial}_A\Big(\bar{\pi}_{A}(\bar{\pi}_{A}(f)g)\Big)
-\bar{\partial}_A\Big(\bar{\pi}_A(f\bar{\pi}_{A}(g))\Big)-\bar{\partial}_A\Big(\lambda \bar{\pi}_{A}(fg)\Big)\\
&=&\bar{\partial}_A(\bar{\pi}_{A}(f))\bar{\pi}_{A}(g)+\bar{\pi}_{A}(f)\bar{\partial}_A(\bar{\pi}_{A}(g))+\lambda \bar{\partial}_A(\bar{\pi}_{A}(f))\bar{\partial}_A(\bar{\pi}_{A}(g))-\bar{\partial}_A\Big(\bar{\pi}_{A}(\bar{\pi}_{A}(f)g)\Big)\\
&&-\bar{\partial}_A\Big(\bar{\pi}_A(f\bar{\pi}_{A}(g))\Big)-\bar{\partial}_A\Big(\lambda \bar{\pi}_{A}(fg)\Big)\quad\text{(by $\bar{\partial}_A$ being a differential operator of weight $\lambda$})\\
&=&f\bar{\pi}_{A}(g)+\bar{\pi}_{A}(f)g+\lambda fg-\bar{\pi}_{A}(f)g-f\bar{\pi}_{A}(g)-\lambda fg\quad\text{(by $\bar{\partial}_A\circ \bar{\pi}_A=\id_{\dao}$)}\\
&=&0.
\end{eqnarray*}
Thus $h_0=(h_0(0),0,\cdots 0,\cdots).$
By Eq.~(\mref{eq:pif}), we get
\begin{eqnarray*}
h_0(0)&=&\bar{\pi}_{A}(f)\bar{\pi}_{A}(g)(0)-\bar{\pi}_{A}(\bar{\pi}_{A}(f) g)(0)-\bar{\pi}_{A}(f\bar{\pi}_{A}(g))(0)-\lambda \bar{\pi}_{A}(fg)(0)\\
&=&(-\lambda)f(0)(-\lambda)g(0)-\Big(-\lambda\Big(-\lambda f(0)g(0)\Big)\Big)-\Big(-\lambda\Big(-\lambda f(0)g(0)\Big)\Big)-\lambda\Big(-\lambda \Big(f(0)g(0)\Big)\Big)\\
&=&\lambda^{2}f(0)g(0)-\lambda^{2}f(0)g(0)-\lambda^{2}f(0)g(0)+\lambda^{2}f(0)g(0)\\
&=&0.
\end{eqnarray*}
Thus $h_0=0$,  and so $\bar{\pi}_{A}$ is a Rota-Baxter operator of weight $\lambda$ on $\dao$.
\end{proof}

\begin{exam}\mlabel{ex:dl}
 Let $R$ be an algebra and let $\lambda \neq 0$. Define a linear operator
 $$d_{\lambda}:R\to R, \quad x\mapsto -\lambda^{-1} x, \quad\forall x \in R.$$
Firstly, $d(1)=-\lambda^{-1}\neq 0$. Then for all $x,y\in R$,
\begin{eqnarray*}
d^{2}_{\lambda}(x)=(-\lambda^{-1})^2x=-\lambda^{-1}d_{\lambda}(x)
\end{eqnarray*}
and
\begin{eqnarray*}
d_{\lambda}(x)y+xd_{\lambda}(y)+\lambda d_{\lambda}(x)d_{\lambda}(y)
=(-\lambda^{-1}x)y+x(-\lambda^{-1}y)+\lambda (-\lambda^{-1}x)(-\lambda^{-1}y)
=d_{\lambda}(xy).
\end{eqnarray*}
Then  $d_\lambda$ is a degenerate quasi-idempotent differential operator of weight $\lambda$.
\end{exam}
\begin{exam}\mlabel{ex:dll}
 Let $R$ be an  algebra and let $\lambda\neq 0$. Define a linear operator
 $$P_{\lambda}:R\to R, \quad x\mapsto -\lambda x, \quad\forall x \in R.$$
Let $x,y\in R$. Then
\begin{eqnarray*}
P_{\lambda}(P_{\lambda}(x)y)+P_{\lambda}(xP_{\lambda}(y))+\lambda P_{\lambda}(xy)
=\lambda^{2}xy+\lambda^{2}xy-\lambda^{2}xy
=P_{\lambda}(x)P_{\lambda}(y),
\end{eqnarray*}
and
\begin{eqnarray*}
P^{2}_{\lambda}(x)=P_{\lambda}(-\lambda x)=-\lambda P_{\lambda}(x).
\end{eqnarray*}
Then  $P_\lambda$ is a quasi-idempotent Rota-Baxter operator of weight $\lambda$. Furthermore, $(d_\lambda \circ P_\lambda)(x)=x$, where $d_\lambda$ is   given in Example~\mref{ex:dl}. Thus, $(R,d_\lambda,P_\lambda)$ is a degenerate quasi-idempotent differential Rota-Baxter algebra of
weight $\lambda$.
\end{exam}

At the end of this section,  we will see that every quasi-idempotent element of $R$ can induce a degenerate quasi-idempotent differential operator, a quasi-idempotent Rota-Baxter operators and further a quasi-idempotent differential Rota-Baxter operator.

\begin{prop}~\cite[Proposition 2.2]{Jian17}
Let $R$ be an algebra. Let $\xi\in R$ be a quasi-idempotent element of weight $\lambda$.  Then the linear operator $P_\xi: R\to R$ given by $x\mapsto \xi x$, is a quasi-idempotent Rota-Baxter operator of weight $\lambda$ on $R$.
\mlabel{prop:qro}
\end{prop}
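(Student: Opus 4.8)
The plan is to verify directly the two conditions demanded of a \qrbo in Definition~\mref{defn:qidrb}(4): that $P_\xi$ is a quasi-idempotent operator of weight $\lambda$, and that it satisfies the Rota-Baxter identity Eq.~\mref{eq:rb}. Linearity of $P_\xi$ is immediate, since it is left multiplication by the fixed element $\xi$. For the quasi-idempotency, I would compute, for all $x\in R$,
\begin{equation*}
P_\xi^2(x)=\xi(\xi x)=\xi^2 x=-\lambda\,\xi x=-\lambda P_\xi(x),
\end{equation*}
where the middle equality uses associativity and the penultimate one uses the hypothesis $\xi^2=-\lambda\xi$. Hence $P_\xi^2=-\lambda P_\xi$.

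Next, to establish the Rota-Baxter identity, I would expand each of the four terms of Eq.~\mref{eq:rb} in terms of $\xi$. One finds $P_\xi(x)P_\xi(y)=(\xi x)(\xi y)=\xi x\xi y$, while $P_\xi(xP_\xi(y))=\xi(x\,\xi y)=\xi x\xi y$ as well. The two remaining terms on the right-hand side are $P_\xi(P_\xi(x)y)=\xi(\xi x\,y)=\xi^2 xy=-\lambda\,\xi xy$ and $\lambda P_\xi(xy)=\lambda\,\xi xy$. The key observation is that these last two terms cancel, again by the quasi-idempotency $\xi^2=-\lambda\xi$, so the right-hand side collapses to $\xi x\xi y$, which equals the left-hand side by associativity.

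There is essentially no serious obstacle here; the argument is a short direct computation, and I should note that it requires neither commutativity of $R$ nor any special position of $\xi$ beyond $\xi^2=-\lambda\xi$. The only point worth flagging explicitly is \emph{where} the quasi-idempotency hypothesis is consumed: it is used twice, once to produce the relation $P_\xi^2=-\lambda P_\xi$, and once to force the cancellation of the two ``error'' terms $P_\xi(P_\xi(x)y)$ and $\lambda P_\xi(xy)$ in the Rota-Baxter identity. After these cancellations the identity reduces to the triviality $P_\xi(x)P_\xi(y)=P_\xi(xP_\xi(y))$, i.e.\ to associativity of the product of $R$. Assembling the two verified conditions yields that $P_\xi$ is a \qrbo of weight $\lambda$ on $R$.
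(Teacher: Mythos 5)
Your proof is correct: the paper itself gives no proof of this proposition (it is quoted from \cite[Proposition 2.2]{Jian17}), and your direct verification --- quasi-idempotency via $\xi^2 x=-\lambda\xi x$ and the Rota-Baxter identity via the cancellation of $P_\xi(P_\xi(x)y)$ against $\lambda P_\xi(xy)$ --- is exactly the standard computation behind the cited result. Your observation that associativity suffices and commutativity of $R$ is not needed is also accurate and consistent with the proposition being stated for a general algebra.
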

Similarly, we have
\begin{prop}
 Let R be a commutative $\mathbf{k}$-algebra.  If $\xi \in R $ be an invertible quasi-idempotent element of weight $\lambda$, then the linear operator $d_{\xi}: R\to R$ given by $x\to \xi^{-1} x$, is a degenerate quasi-idempotent differential operator of weight $\lambda$.
\mlabel{prop:qdo}
\end{prop}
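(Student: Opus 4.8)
The plan is to verify the three conditions from Definition~\mref{defn:qidrb} that characterize a degenerate quasi-idempotent differential operator of weight $\lambda$: that $d_\xi$ is a quasi-idempotent operator of weight $\lambda^{-1}$, i.e. $d_\xi^2=-\lambda^{-1}d_\xi$; that $d_\xi$ obeys the Leibniz rule~\meqref{eq:gle}; and that $d_\xi(1)\neq 0$.

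First I would extract the one structural consequence of the hypotheses that does all the work. Invertibility of $\xi$ together with $\xi^2=-\lambda\xi$ forces $\xi$ to be a scalar multiple of the identity: multiplying $\xi^2=-\lambda\xi$ by $\xi^{-1}$ gives $\xi=-\lambda 1_R$, hence $\xi^{-1}=-\lambda^{-1}1_R$. This is the point at which invertibility is genuinely used, and it collapses the apparently general operator $d_\xi$ onto something already understood.

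Once this is in hand, I would simply observe that $d_\xi(x)=\xi^{-1}x=-\lambda^{-1}x$ for all $x\in R$, so that $d_\xi$ is literally the operator $d_\lambda$ of Example~\mref{ex:dl}, which was shown there to be a degenerate quasi-idempotent differential operator of weight $\lambda$; this finishes the proof. If a self-contained verification is preferred over citing the example, I would substitute $\xi^{-1}=-\lambda^{-1}1_R$ into the three conditions directly: $d_\xi^2(x)=\xi^{-2}x=\lambda^{-2}x=-\lambda^{-1}d_\xi(x)$ yields quasi-idempotence of weight $\lambda^{-1}$; the Leibniz rule, after using commutativity to write $d_\xi(x)d_\xi(y)=\xi^{-2}xy$, reduces to the numerical identity $\xi^{-1}+\lambda\xi^{-2}=-\lambda^{-1}+\lambda^{-1}=0$; and $d_\xi(1)=\xi^{-1}\neq 0$ since $\xi$ is invertible.

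There is no serious obstacle here: the entire content lies in recognizing that the invertibility hypothesis silently reduces $\xi$ to $-\lambda 1_R$, after which everything is immediate. The only point requiring genuine care is bookkeeping with the two weight conventions in play — $d_\xi$ is a quasi-idempotent operator of weight $\lambda^{-1}$ but is, by definition, a differential operator of weight $\lambda$ — so that the exponents of $\lambda$ must be tracked consistently throughout.
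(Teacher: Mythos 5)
Your proposal is correct. It proves the same three conditions the paper verifies, but via a slightly different route: you first observe that invertibility together with $\xi^2=-\lambda\xi$ forces $\xi=-\lambda 1_R$, so that $d_\xi$ coincides with the operator $d_\lambda$ of Example~\mref{ex:dl}, and the proposition follows by citation. The paper instead keeps $\xi^{-1}$ as an abstract element, deriving only the relation $\lambda(\xi^{-1})^2=-\xi^{-1}$ from the hypothesis and then checking quasi-idempotence, the Leibniz rule, and $d_\xi(1)=\xi^{-1}\neq 0$ by direct substitution --- which is exactly your fallback ``self-contained verification,'' and your coefficient computations there all check out. Your observation that $\xi$ collapses to the scalar $-\lambda 1_R$ is a genuine (and correct) simplification the paper does not make explicit; it makes transparent why the invertible case is essentially degenerate and reduces the proposition to an already-established example, at the cost of making the statement look less substantive than the paper's phrasing suggests. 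Your closing remark about tracking the two weight conventions ($d_\xi$ is quasi-idempotent of weight $\lambda^{-1}$ but a differential operator of weight $\lambda$) is exactly the right point of care and matches the paper's Definition~\mref{defn:qidrb}.
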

\begin{proof}
By $\xi^2=-\lambda \xi$, we get $\lambda (\xi^{-1})^2=-\xi^{-1}$ and $\lambda\neq 0$. Then for all $x,y\in R$, we have
\begin{eqnarray*}
d^{2}_{\xi}(x)=(\xi^{-1})^2x
=-\lambda^{-1}d_{\xi}(x),\quad d_{\xi}(1)=\xi^{-1}\neq 0,
\end{eqnarray*}
and
\begin{eqnarray*}
d_\xi(x)y+xd_\xi(y)+\lambda d_\xi(x)d_\xi(y)
=\xi^{-1}xy+\xi^{-1}xy+\lambda(\xi^{-1})^2xy
=d_\xi (xy).
\end{eqnarray*}
Thus $d_{\xi}$ is a degenerate quasi-idempotent differential operator of weight $\lambda$.
\end{proof}

\begin{prop}
Let R be a commutative $\mathbf{k}$-algebra.  Let $\xi\in R$ be an invertible quasi-idempotent element of weight $\lambda$. Let $d_\xi$ and $P_\xi$ be as above. Then
\begin{enumerate}
\item The pair $(R,d_{\xi})$ is a commutative degenerate quasi-idempotent differential $\mathbf{k}$-algebra of weight $\lambda$.
\mlabel{it:cdq}
\item The pair $(R,P_{\xi})$ is a commutative quasi-idempotent Rota-Baxter $\mathbf{k}$-algebra of weight $\lambda$.
\mlabel{it:cqr}
\item The triple $(R,d_{\xi},P_{\xi})$ is a commutative degenerate quasi-idempotent differential Rota-Baxter $\mathbf{k}$-algebra of weight $\lambda$.
\mlabel{it:cqdr}
\end{enumerate}
\end{prop}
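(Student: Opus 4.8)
The plan is to assemble this proposition from the two results that immediately precede it, since each of the three claims is largely already in hand. For part~\ref{it:cdq}, I would simply invoke Proposition~\ref{prop:qdo}, which establishes that the map $d_\xi\colon x\mapsto\xi^{-1}x$ is a degenerate quasi-idempotent differential operator of weight $\lambda$; as $R$ is assumed commutative, the pair $(R,d_\xi)$ is by definition a commutative degenerate quasi-idempotent differential $\mathbf{k}$-algebra of weight $\lambda$. For part~\ref{it:cqr}, I would invoke Proposition~\ref{prop:qro}, which establishes that $P_\xi\colon x\mapsto\xi x$ is a quasi-idempotent Rota-Baxter operator of weight $\lambda$; again by commutativity of $R$, the pair $(R,P_\xi)$ is a commutative quasi-idempotent Rota-Baxter $\mathbf{k}$-algebra of weight $\lambda$.

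For part~\ref{it:cqdr}, the only genuinely new content is the fundamental-theorem-of-calculus compatibility $d_\xi\circ P_\xi=\id_R$. This reduces to a one-line computation using associativity and the invertibility of $\xi$: for every $x\in R$ one has $(d_\xi\circ P_\xi)(x)=d_\xi(\xi x)=\xi^{-1}(\xi x)=(\xi^{-1}\xi)x=x$. Combining this identity with parts~\ref{it:cdq} and~\ref{it:cqr}, the triple $(R,d_\xi,P_\xi)$ satisfies all the required axioms. Since $d_\xi(1)=\xi^{-1}\neq 0$, the differential operator is degenerate, so the structure obtained is a \emph{degenerate} quasi-idempotent differential Rota-Baxter algebra, matching the terminology already used in Example~\ref{ex:dll}.

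I do not anticipate a substantive obstacle: the statement is essentially a corollary of the two preceding propositions. The only point demanding a little care is conceptual rather than computational, namely recognizing that because $d_\xi(1)\neq 0$ the triple falls under the degenerate variant rather than a genuine quasi-idempotent differential Rota-Baxter algebra in the sense of Definition~\ref{defn:qidrb}, so that the sole compatibility to be verified is precisely $d_\xi\circ P_\xi=\id_R$, alongside the degenerate differential and Rota-Baxter axioms already secured in parts~\ref{it:cdq} and~\ref{it:cqr}.
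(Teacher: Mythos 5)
Your proposal is correct and follows exactly the paper's own argument: parts (a) and (b) are cited from Propositions~\mref{prop:qdo} and~\mref{prop:qro} respectively, and part (c) is the same one-line computation $(d_\xi\circ P_\xi)(x)=\xi^{-1}(\xi x)=x$ combined with the first two items. Your additional remark on why the structure is the degenerate variant is accurate and consistent with the paper's terminology.
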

\begin{proof}
(\mref{it:cdq}) follows from Proposition~\mref{prop:qdo}.
\smallskip

\noindent
(\mref{it:cqr}) follows from Proposition~\mref{prop:qro}.
\smallskip

\noindent
(\mref{it:cqdr}) For all $x\in R$, we have
\begin{eqnarray*}
(d_{\xi}\circ P_{\xi})(x)=d_{\xi}(\xi x)=\xi^{-1}(\xi x) =x,
\end{eqnarray*}
proving $d_{\xi}\circ P_{\xi}=\id_{R}$. Then by Items~(\mref{it:cdq}) and~(\mref{it:cqr}), $(R, d_\xi, P_\xi)$ is a degenerate quasi-idempotent differential Rota-Baxter algebra of weight $\lambda$.
\end{proof}

\section{Gr\"obner-Shirshov bases  for free commutative $\Omega$-operated algebras}
\mlabel{sec:foa}
In this section, we first recall from~\cite{GGR18,Qiu14}  the construction of free commutative $\Omega$-operated unitary algebras on a set $Y$, and the Composition-Diamond lemma for free commutative $\Omega$-operated algebras.
\subsection{Free commutative $\Omega$-operated algebras}
An {\bf $\Omega$-operated algebra} (or called an  {\bf algebra with multiple operators}) is defined to be an algebra $R$  equipped with a set $\Omega$ of multiple linear operators.

Let $Y$ be a set. Denote by $C(Y)$ the free commutative monoid on $Y$ with the identity $1$. Let
\begin{eqnarray*}
\Omega=\bigcup^{\infty}_{n=1} \Omega_{n},
\end{eqnarray*}
where $\Omega_{n}$ is the set of $n$-ary operators. We first construct the free commutative $\Omega$-operated monoid on $Y$ by a direct system $$\{\,\iota_{k}: \frakc_k\to \frakc_{k+1}\, \}_{k=0}^\infty$$
of free commutative monoids $\frakc_n$, where  $\iota_{n}$  is the natural embedding.

Let
\begin{eqnarray*}
Y_{0}:=Y\quad\text{and}\quad \frakc_{0}:=C(Y_{0}).
\end{eqnarray*}
Then we define
\begin{eqnarray*}
Y_{1}:=Y\sqcup \Omega(\frakc_{0})\quad \text{and}\quad\frakc_{1}:=C(Y_{1}),
\end{eqnarray*}
where
\begin{eqnarray*}
\Omega(\frakc_{0})=\bigcup^{\infty}_{n=1}\{\,\omega_n (u_{1},u_{2},\cdots ,u_{n})\,|\,\omega_n\in \Omega_{n},u_{i}\in \frakc_{0},i=1,2,\cdots,n\}.
\end{eqnarray*}
The injection $\iota_0: Y_0 \to Y_1$ induces an embedding from  $ \frakc_0$ to $\frakc_1$, still denoted by $\iota_0$.
For a given $k\geq 0$, assume by induction that we have  defined the commutative $\Omega$-operated monoid $\frakc_i$ with the properties that $\frakc_{i}=C(Y\sqcup\Omega(\frakc_{i-1}))$ and the natural embedding $\iota_{i-1}:\frakc_{i-1}\to\frakc_{i}$ for $0\leq i\leq k$.
Set
\begin{eqnarray*}
Y_{k+1}:=Y\sqcup \Omega(\frakc_{k})\quad\text{and}\quad \frakc_{k+1}:=C(Y_{k+1}),
\end{eqnarray*}
where
\begin{eqnarray*}
\Omega(\frakc_{k})=\bigcup^{\infty}_{n=1}\{\,\omega_n (u_{1},u_{2},\cdots ,u_{n})\,|\,\omega_n\in \Omega_{n},u_{i}\in \frakc_{k},i=1,2,\cdots,n\}.
\end{eqnarray*}
Then the identity map on $Y$ and $\iota_{k-1}$ together induce an injection
$$\iota_{k}:Y\sqcup\Omega(\frakc_{k-1})\to Y\sqcup\Omega(\frakc_{k}).$$
Then by the functoriality of $C$, we get an embedding, also denoted by $\iota_k$,
$$\iota_k: \frakc_k\to\frakc_{k+1}.$$
This completes the desired direct system.  Then by taking the direct limit, we obtain a commutative monoid
\begin{eqnarray*}
\frakc(Y):=\dirlim
\frakc_k=\bigcup_{k\geq 0}\frakc_{k}.
\end{eqnarray*}
Applying the direct limit on both sides of the equation
$\frakc_k=C(Y\sqcup\Omega(\frakc_{k-1}))$, we have
$$\frakc(Y)=C(Y\sqcup\Omega(\frakc(Y))).$$
It follows that $\Omega(\frakc(Y)\subseteq \frakc(Y)$.  Then every nonunit element $u\in\frakc(Y)$ has a unique  expression of the form
\begin{equation}
u=u_1\cdots u_r,
\end{equation}
where $r\geq 1$ and $u_i\in Y\sqcup \Omega(\frakc(Y)), i=1,\cdots, r$, which is called a {\bf prime $\Omega$-word}. We call $r$ the {\bf breadth} of $u$, denoted by $|u|$, with the convention that $|u|=0$ if $u=1$. The {\bf depth} $\dep(u)$ of $u$ is said to be the least $n\geq 0$
such that $u$ is contained in  $\frakc_n$. Thus $\dep(u)=0$ if and only if $u\in\frakc_0= C(Y)$.

Define a set map
$$\lc\,\rc_{\omega_n}: \frakc(Y)^{n}\to \frakc(Y), \quad (u_1,u_2,\cdots, u_n)\mapsto \omega_n(u_1,u_2,\cdots,u_n),\quad \forall\omega_n\in \Omega_n.$$
Let $\lc\,\rc_{\Omega_n}:=\{\lc\,\rc_{\omega_n}\,|\,\omega_n\in\Omega_n\}$. Let $\lc\,\rc_\Omega:=\bigcup_{n\geq 1}^{\infty}\{\lc\,\rc_{\Omega_n}\,|\,\Omega_n\in\Omega\}$. Then $\frakc(Y)$ together with $\lc\,\rc_\Omega$ forms a commutative $\Omega$-operated monoid. Denote by $\bfk\frakc(Y)$  the  $\bfk$-space with basis $\frakc(Y)$. Extending the multiplication on $\frakc(Y)$ by bilinearity and the set maps $\lc\,\rc_\Omega$ by multilinearity, also dented by $\lc\,\rc_\Omega$,  we obtain a commutative $\Omega$-operated algebra $\bfk\frakc(Y)$. An element of $\frakc(Y)$ (resp. $\bfk\frakc(Y)$) is called a {\bf commutative $\Omega$-word}(resp. {\bf commutative $\Omega$-polynomial}).
If $Y$ is a finite set, we can also just list its elements, as in $\bfk\frakc(x,y)$ when $Y = \{x, y\}$. Let $j_Y:Y\to \bfk\frakc(Y)$ be the natural embedding.
\begin{prop}~\cite{GGZ14,GGR18,Qiu14} Let $j_Y$,  $\lc\,\rc_\Omega$  and $\bfk\frakc(Y)$ be as above.
Then  $\bfk\frakc(Y):=(\bfk\frakc(Y), \lc\,\rc_\Omega, j_Y)$ is  the free commutative $\Omega$-operated unitary algebra on set $Y$.
\end{prop}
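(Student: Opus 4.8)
The plan is to prove the asserted universal property directly, by induction on depth, using the direct-limit presentation $\frakc(Y)=\dirlim\frakc_k$ with $\frakc_{k+1}=C(Y\sqcup\Omega(\frakc_k))$. Concretely, let $R$ be an arbitrary commutative $\Omega$-operated unitary $\bfk$-algebra, equipped for each $\omega_n\in\Omega_n$ with its $n$-ary operator $\omega_n^R\colon R^n\to R$, and let $f\colon Y\to R$ be any set map. What must be shown is that there is a \emph{unique} $\Omega$-operated algebra homomorphism $\bar f\colon\bfk\frakc(Y)\to R$ with $\bar f\circ j_Y=f$; here a homomorphism means a unital $\bfk$-algebra homomorphism that commutes with every operator, i.e.\ $\bar f(\omega_n(u_1,\dots,u_n))=\omega_n^R(\bar f(u_1),\dots,\bar f(u_n))$.

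\emph{Existence.} I would construct $\bar f$ level by level. On $\frakc_0=C(Y)$, the free commutative monoid on $Y$, the map $f$ extends uniquely to a monoid homomorphism into $(R,\cdot)$ and hence, by $\bfk$-linearity, to a unital $\bfk$-algebra homomorphism $\bar f_0\colon\bfk\frakc_0\to R$ (the polynomial algebra on $Y$). Assume inductively that a unital $\bfk$-algebra homomorphism $\bar f_k\colon\bfk\frakc_k\to R$ has been defined, agreeing with $\bar f_{k-1}$ under $\iota_{k-1}$. Since $\frakc_{k+1}=C(Y\sqcup\Omega(\frakc_k))$ is free commutative on $Y\sqcup\Omega(\frakc_k)$, it suffices to prescribe images of the free generators: send $y\in Y$ to $f(y)$, and send each $\omega_n(u_1,\dots,u_n)\in\Omega(\frakc_k)$ to $\omega_n^R(\bar f_k(u_1),\dots,\bar f_k(u_n))$, which makes sense because each $u_i\in\frakc_k$ lies in the domain of $\bar f_k$. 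By freeness this assignment extends uniquely to a unital $\bfk$-algebra homomorphism $\bar f_{k+1}\colon\bfk\frakc_{k+1}\to R$, and it agrees with $\bar f_k$ on the generators of $\frakc_k$, hence on all of $\bfk\frakc_k$ via $\iota_k$. Passing to the direct limit $\bfk\frakc(Y)=\bigcup_{k\geq 0}\bfk\frakc_k$, the compatible family $\{\bar f_k\}$ glues to a single unital $\bfk$-algebra homomorphism $\bar f$. By construction $\bar f\circ j_Y=f$, and for any $\omega_n\in\Omega_n$ and $u_1,\dots,u_n\in\frakc(Y)$ (which all lie in a common $\frakc_k$) the defining formula gives $\bar f(\omega_n(u_1,\dots,u_n))=\omega_n^R(\bar f(u_1),\dots,\bar f(u_n))$, so $\bar f$ commutes with every operator and is an $\Omega$-operated algebra homomorphism.

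\emph{Uniqueness.} I would show that any $\Omega$-operated algebra homomorphism $g$ with $g\circ j_Y=f$ coincides with $\bar f$, again by induction on depth. On $Y_0=Y$ they agree by hypothesis, and since both are unital $\bfk$-algebra homomorphisms and $\frakc_0=C(Y)$ is generated by $Y$, they agree on $\bfk\frakc_0$. If $g$ and $\bar f$ agree on $\bfk\frakc_k$, then for any generator $\omega_n(u_1,\dots,u_n)\in\Omega(\frakc_k)$ the operator-commuting property forces $g(\omega_n(u_1,\dots,u_n))=\omega_n^R(g(u_1),\dots,g(u_n))=\omega_n^R(\bar f(u_1),\dots,\bar f(u_n))=\bar f(\omega_n(u_1,\dots,u_n))$; thus $g$ and $\bar f$ agree on all free generators of $\frakc_{k+1}$, and hence on $\bfk\frakc_{k+1}$. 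Taking the union over $k$ yields $g=\bar f$ on $\bfk\frakc(Y)$.

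\emph{Main obstacle.} The only genuinely delicate points are bookkeeping rather than conceptual: I must check that the level-$k$ maps are mutually compatible under the embeddings $\iota_k$ so that they actually glue across the direct limit, and that the operator formula is independent of the chosen representation of an element. Both are guaranteed by the freeness of each $C(Y_k)$ as a commutative monoid, equivalently by the unique prime $\Omega$-word decomposition established above, together with the functoriality of $C$. Everything else reduces to the routine verification that the inductively defined assignments respect multiplication, the unit, and each operator.
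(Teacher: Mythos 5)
Your proof is correct and is essentially the standard argument: the paper itself gives no proof of this proposition, deferring to the cited references \cite{GGZ14,GGR18,Qiu14}, and those sources establish the universal property exactly as you do, by extending $f$ level by level through the filtration $\frakc_{k+1}=C(Y\sqcup\Omega(\frakc_k))$ using the freeness of $C(-)$ at each stage, gluing along the direct limit, and proving uniqueness by induction on depth. Your attention to the compatibility of the $\bar f_k$ under the embeddings $\iota_k$ is the right point to flag, and it is handled correctly.
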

\delete{
\begin{prop}\cite{ZGG23}\mlabel{prop:quo}
Let $X = \{u_1,\cdots,u_n\}$ and let $\phi(u_1,\cdots,u_n)\in\bfk\frakc(X)$ with $n\geq 1$.  Let $\Id_\phi$ be the $\Omega$-operated ideal in $\bfk\frakc(Y)$  generated by the following set
$$\{\phi(r_1,\cdots,r_n)\,|\,r_1,r_2,\cdots,r_n\in \bfk\frakc(Y)\}.
$$
and let $\Pi_\phi: \bfk\frakc(Y)\to\bfk\frakc(Y)/\Id_\phi$ be the quotient morphism. Then the quotient $\Omega$-operated algebra $\bfk\frakc(Y)/\Id_\phi$, together
with $i_Y:=\Pi_\phi\circ j_Y$ and the operator $P_\Omega$ induced by $\lc\,\rc_\Omega$, is the free commutative $\phi$-algebra on $Y$.
\end{prop}
}
Now we set  $X = \{x_1,\cdots,x_n\}$. Let $\phi(x_1,x_2,\cdots,x_n)$ be a commutative $\Omega$-polynomial in $\bfk\frakc(X)$.  Let $(R,P_\Omega)$ be a commutative $\Omega$-operated algebra, where $P_\Omega:=\{P_{\omega_n}:R^n\to R\,|\,\omega_n\in\Omega_n,n\geq 1\}$, are the multiple linear operators on $R$. Define a set map by defining
$$f:X\to R, \quad x_i\mapsto r_i,\quad i=1,\cdots,n.$$
By the universal property of $\bfk\frakc(X)$, there exists a unique $\Omega$-operated algebra morphism $\bar{f}:\bfk\frakc(X)\to R$.  Then we denote
$$\phi_R(r_1,r_2,\cdots,r_n):=\bar{f}\Big(\phi(x_1,\cdots,x_n)\Big).$$
\begin{defn}~\cite{QQWZ21,ZGG23} Let $X=\{x_1,\cdots,x_n\}$ and let $\phi(x_1,\cdots,x_n)\in\bfk\frakc(X)$.
\begin{enumerate}
\item We say that a commutative $\Omega$-operated algebra $(R,P_\Omega)$ is a {\bf commutative $\phi$-algebra}, if
$$\phi_R(r_1,\cdots, r_n)=0\quad \text{for all}\, r_1,\cdots, r_n\in R.$$
In this case, $P_\Omega$ is called a {\bf $\phi$-operator}.
\item We call $\phi(x_1,\cdots, x_n)=0$ (or simply $\phi(x_1,\cdots, x_n)$) a {\bf commutative $\Omega$-operated polynomial identity} (short for {\bf $\Omega$-COPI}).
\item Let $\Phi\subset \bfk\frakc(X)$ be a  family of $\Omega$-COPIs.  A commutative $\Omega$-operated algebra $(R, P_\Omega)$ is called a {\bf commutative $\Phi$-algebra} if it is a $\phi$-algebra for all $\phi\in \Phi$.
\item Let $S$ be a subset of $R$. The smallest operated ideal of $R$ containing $S$ is called the {\bf $\Omega$-operated ideal generated by $S$}, denoted by $\Id(S)$.
\end{enumerate}
\end{defn}
\begin{prop}\cite{GGR18,QQWZ21}\mlabel{prop:quos} Let $\Phi\subset \bfk\frakc(X)$ be a  family of $\Omega$-COPIs. Let $R=\bfk\frakc(Z)$ be the free commutative $\Omega$-operated algebra on the set $Z$ with the natural embedding $j_Z: Z\to \bfk\frakc(Z)$. Let $\Id_\Phi(Z)$ be the $\Omega$-operated ideal in $R$ generated by the following set
$$\{\phi_R(r_1,\cdots,r_n)\,|\, r_1,\cdots, r_n\in R,\,\phi(u_1,\cdots u_n)\in \Phi\}.$$
Let $\Pi_{\Phi}: R\to R/\Id_\Phi(Z)$ be the quotient morphism. Then the quotient $\Omega$-operated algebra $R/\Id_\Phi(Z)$, together
with $i_Z:=\Pi_{\Phi}\circ j_Z$ and the operator $P_\Omega$ induced by $\lc\,\rc_\Omega$, is the free commutative $\Phi$-algebra on $Z$.
\end{prop}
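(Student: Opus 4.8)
The plan is the standard ``free object as a quotient'' argument. The one technical ingredient, and the only place I expect any real work, is a naturality statement for the evaluation map: any $\Omega$-operated algebra morphism commutes with evaluating $\phi$. Precisely, for an $\Omega$-operated morphism $\theta:R\to S$, elements $r_1,\dots,r_n\in R$, and $\phi\in\Phi$, I claim $\theta(\phi_R(r_1,\dots,r_n))=\phi_S(\theta(r_1),\dots,\theta(r_n))$. To prove it, recall $\phi_R(r_1,\dots,r_n)=\bar f(\phi)$, where $\bar f:\bfk\frakc(X)\to R$ is the unique morphism extending $x_i\mapsto r_i$; likewise $\phi_S(\theta(r_1),\dots,\theta(r_n))=\bar h(\phi)$, where $\bar h:\bfk\frakc(X)\to S$ is the unique morphism extending $x_i\mapsto\theta(r_i)$. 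Now $\theta\circ\bar f$ and $\bar h$ are both $\Omega$-operated morphisms $\bfk\frakc(X)\to S$ sending $x_i\mapsto\theta(r_i)$, so by the uniqueness in the universal property of $\bfk\frakc(X)$ they coincide; evaluating at $\phi$ yields the claim.

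With this lemma in hand, first I would confirm that $R/\Id_\Phi(Z)$ is genuinely a commutative $\Phi$-algebra. It is a commutative $\Omega$-operated algebra because $\Id_\Phi(Z)$ is an $\Omega$-operated ideal. Taking $\theta=\Pi_\Phi$ in the lemma,
$$\phi_{R/\Id_\Phi(Z)}(\Pi_\Phi(r_1),\dots,\Pi_\Phi(r_n))=\Pi_\Phi\big(\phi_R(r_1,\dots,r_n)\big)=0,$$
since $\phi_R(r_1,\dots,r_n)$ is by definition a generator of $\Id_\Phi(Z)$. As $\Pi_\Phi$ is surjective, every $n$-tuple in the quotient arises this way, so each $\phi\in\Phi$ vanishes identically on $R/\Id_\Phi(Z)$.

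Next I would verify the universal property over commutative $\Phi$-algebras. Given such an algebra $(S,Q_\Omega)$ and a set map $g:Z\to S$, the universal property of $R=\bfk\frakc(Z)$ supplies a unique $\Omega$-operated morphism $\tilde g:R\to S$ with $\tilde g\circ j_Z=g$. Applying the lemma to $\theta=\tilde g$ and using that $S$ is a $\Phi$-algebra gives $\tilde g(\phi_R(r_1,\dots,r_n))=\phi_S(\tilde g(r_1),\dots,\tilde g(r_n))=0$, so every generator of $\Id_\Phi(Z)$ lies in $\ker\tilde g$. Since $\ker\tilde g$ is an $\Omega$-operated ideal and $\Id_\Phi(Z)$ is the smallest $\Omega$-operated ideal containing these generators, we obtain $\Id_\Phi(Z)\subseteq\ker\tilde g$; hence $\tilde g$ factors uniquely as $\bar g\circ\Pi_\Phi$ for an $\Omega$-operated morphism $\bar g:R/\Id_\Phi(Z)\to S$, and then $\bar g\circ i_Z=\bar g\circ\Pi_\Phi\circ j_Z=g$. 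Uniqueness of $\bar g$ is immediate from the surjectivity of $\Pi_\Phi$ together with the fact that $i_Z(Z)$ generates $R/\Id_\Phi(Z)$ as an $\Omega$-operated algebra, which completes the proof.
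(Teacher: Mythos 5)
Your argument is correct and is exactly the standard one: the paper itself gives no proof of this proposition (it is quoted from the cited references), and your two steps --- the naturality of evaluation $\theta(\phi_R(r_1,\dots,r_n))=\phi_S(\theta(r_1),\dots,\theta(r_n))$ via uniqueness in the universal property of $\bfk\frakc(X)$, followed by the quotient-by-the-smallest-operated-ideal factorization --- are precisely how the result is established in those sources. No gaps; the uniqueness of $\bar g$ via surjectivity of $\Pi_\Phi$ is also handled correctly.
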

From this, we obtain
\begin{prop}\mlabel{prop:drb}Let $X=\{x,y\}$ and let $\Omega=\{\lc\, \rc_{\rm d},\,\lc\, \rc_{\rm p}\}$ be a set of two distinct linear operators on $\bfk\frakc(x,y)$. Let $R=\bfk\frakc(Z)$ be the free commutative $\Omega$-operated algebra on the set $Z$ with the natural embedding $j_Z: Z\to \bfk\frakc(Z)$. Let $0\neq \lambda\in \bfk$.
Denote
 $$\Phi_{\rm d}:=\left\{\begin{array}{lll}
 \phi_1(x,y):=\lc x\rc _{\rm d}\lc y\rc _{\rm d}+\lambda^{-1}\lc x\rc _{\rm d}y+\lambda^{-1}x\lc y\rc _{\rm d}-\lambda^{-1}\lc x y\rc _{\rm d},\\
 \phi_2(x,y):=\lc x\rc^2 _{\rm d}+\lambda^{-1} \lc x\rc _{\rm d}\\
 \end{array}\right\}\subset \bfk\frakc(x,y).$$
 $$\Phi_{\rm p}:=\left\{\begin{array}{lll}
 \phi_3(x,y):=\lc x\rc_{\rm p} \lc y\rc_{\rm p}-\lc x\lc y\rc_{\rm p}\rc_{\rm p}-\lc \lc x\rc_{\rm p}y\rc_{\rm p}-\lambda \lc xy\rc_{\rm p},\\
 \phi_4(x,y):=\lc x\rc^2_{\rm p}+\lambda \lc x\rc_{\rm p}\\
 \end{array}\right\}\subset \bfk\frakc(x,y).$$
 $$\Phi_{\rm dp}:=\Phi_{\rm d}\cup \Phi_{\rm p}\cup\left\{\begin{array}{lll}
 \phi_5(x,y):=\lc \lc x\rc_{\rm p}\rc_{\rm d}-x
 \end{array}\right\}\subset \bfk\frakc(x,y).$$
 \begin{enumerate}
 \item Let $\Pi_{\Phi_{\rm d}}: R\to R/\Id_{\Phi_{\rm d}}(Z)$ be the quotient morphism. Then the quotient $\Omega$-operated algebra $R/\Id_{\Phi_{\rm d}}(Z)$, together
with $i_Z:=\Pi_{\Phi_{\rm d}}\circ j_Z$ and the operator $d$ induced by $\lc\,\rc_{\rm d}$, is the free commutative quasi-idempotent differential algebra on $Z$. \mlabel{it:fqd}
 \item Let $\Pi_{\Phi_{\rm p}}: R\to R/\Id_{\Phi_{\rm p}}(Z)$ be the quotient morphism. Then the quotient $\Omega$-operated algebra $R/\Id_{\Phi_{\rm p}}(Z)$, together
with $i_Z:=\Pi_{\Phi_{\rm p}}\circ j_Z$ and the operator $P$ induced by $\lc\,\rc_{\rm p}$, is the free commutative quasi-idempotent Rota-Baxter algebra on $Z$. \mlabel{it:fqrb}
 \item Let $\Pi_{\Phi_{\rm dp}}: R\to R/\Id_{\Phi_{\rm dp}}(Z)$ be the quotient morphism. Then the quotient $\Omega$-operated algebra $R/\Id_{\Phi_{\rm dp}}(Z)$, together
with $i_Z:=\Pi_{\Phi_{\rm dp}}\circ j_Z$ and the operators $d$  and $P$  induced by $\lc\,\rc_{\rm d}$  and $\lc \rc_{\rm p}$ respectively, is the free commutative quasi-idempotent differential Rota-Baxter algebra on $Z$. \mlabel{it:fqdrb}
 \end{enumerate}
\end{prop}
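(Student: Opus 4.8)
The plan is to obtain all three statements from the general transfer result already in hand, namely Proposition~\ref{prop:quos}: for any family $\Phi$ of $\Omega$-COPIs, the quotient $R/\Id_\Phi(Z)$ is the free commutative $\Phi$-algebra on $Z$, with operators induced by $\lc\,\rc_{\rm d}$ and $\lc\,\rc_{\rm p}$. Consequently, for each item it suffices to verify that the generating identities of $\Phi_{\rm d}$ (resp.\ $\Phi_{\rm p}$, resp.\ $\Phi_{\rm dp}$) are, under the dictionary $\lc\,\rc_{\rm d}\leftrightarrow d$ and $\lc\,\rc_{\rm p}\leftrightarrow P$, exactly the defining axioms of a commutative quasi-idempotent differential (resp.\ Rota-Baxter, resp.\ differential Rota-Baxter) algebra in Definition~\ref{defn:qidrb}. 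Once the two classes of algebras are shown to coincide, so do their free objects on $Z$, which is the assertion.

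First I would run the translation for item~(a). Applying the realization map and clearing $\lambda^{-1}$, the relation $\phi_{1,R}(r_1,r_2)=0$ becomes
\begin{equation*}
d(r_1r_2)=d(r_1)r_2+r_1d(r_2)+\lambda d(r_1)d(r_2),
\end{equation*}
which is Eq.~\eqref{eq:gle}, while $\phi_{2,R}(r)=0$ reads $d^2(r)+\lambda^{-1}d(r)=0$, i.e.\ the quasi-idempotency $d^2=-\lambda^{-1}d$ of weight $\lambda^{-1}$; conversely these axioms return $\phi_1=\phi_2=0$. For item~(b) the matching is even more direct: $\phi_{3,R}(r_1,r_2)=0$ is verbatim the Rota-Baxter identity Eq.~\eqref{eq:rb}, and $\phi_{4,R}(r)=0$ is the quasi-idempotency $P^2=-\lambda P$ of weight $\lambda$, so commutative $\Phi_{\rm p}$-algebras are precisely commutative quasi-idempotent Rota-Baxter algebras.

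For item~(c) I would combine the previous two matchings and dispose of the one extra generator. Since $\Id_{\Phi_{\rm dp}}(Z)$ is generated by $\Phi_{\rm d}\cup\Phi_{\rm p}\cup\{\phi_5\}$, the quotient carries simultaneously a quasi-idempotent differential operator $d$ and a quasi-idempotent Rota-Baxter operator $P$ by items~(a) and~(b), while the remaining relation $\phi_{5,R}(r)=0$ reads $d(P(r))=r$, that is $d\circ P=\id_R$, the compatibility in Definition~\ref{defn:qidrb}(5). Hence a commutative $\Phi_{\rm dp}$-algebra is the same datum as a commutative quasi-idempotent differential Rota-Baxter algebra, and Proposition~\ref{prop:quos} identifies $R/\Id_{\Phi_{\rm dp}}(Z)$ with the free one on $Z$.

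The step I expect to require the most care is the unit normalization in item~(a): Eq.~\eqref{eq:gle} specialized to $(1,1)$ yields only $d(1)\bigl(d(1)+\lambda^{-1}\bigr)=0$, which does not by itself force $d(1)=0$. Thus, to match $\Phi_{\rm d}$-algebras with genuine --- as opposed to the weak or degenerate --- quasi-idempotent differential algebras of Definition~\ref{defn:qidrb}(6), one must account for Eq.~\eqref{eq:done} explicitly, and similarly confirm in items~(a) and~(b) that leaving one operator of $\Omega$ unconstrained is harmless for the intended single-operator universal property. This reconciliation is the real content; everything else follows formally from Proposition~\ref{prop:quos}.
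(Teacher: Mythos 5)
Your route is the same as the paper's: the paper offers no argument for this proposition beyond the phrase ``From this, we obtain,'' i.e.\ it treats all three items as immediate instances of Proposition~\mref{prop:quos} once one observes that $\phi_1,\dots,\phi_5$ transcribe, under $\lc\,\rc_{\rm d}\leftrightarrow d$ and $\lc\,\rc_{\rm p}\leftrightarrow P$, the axioms of Definition~\mref{defn:qidrb}; your translation of each $\phi_i$ is correct. The two loose ends you flag are real and are present in the paper itself, not artifacts of your write-up. First, $\Phi_{\rm d}$ contains no generator enforcing \meqref{eq:done}: specializing $\phi_1$ or $\phi_2$ at $1$ only yields $d(1)\bigl(d(1)+\lambda^{-1}\bigr)=0$, and indeed $\lc 1\rc_{\rm d}$ survives as an irreducible word in the quotient, so $R/\Id_{\Phi_{\rm d}}(Z)$ is literally the free commutative \emph{weak} quasi-idempotent differential algebra in the sense of Definition~\mref{defn:qidrb}; to get the stated claim one must either adjoin the relation $\lc 1\rc_{\rm d}$ to $\Phi_{\rm d}$ (and correspondingly to the Gr\"obner--Shirshov basis $S_{\rm d}$ later) or read the conclusion with the weak definition. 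Second, for items (a) and (b) the signature must be shrunk to the relevant singleton (as the paper does implicitly in Theorems~\mref{thm:fcqd} and~\mref{thm:fcqrb}): with $\Omega=\{\lc\,\rc_{\rm d},\lc\,\rc_{\rm p}\}$ the quotient $R/\Id_{\Phi_{\rm d}}(Z)$ is free among $\Omega$-operated algebras carrying an additional unconstrained unary operator, which is \emph{not} the same universal property, so this is not merely ``harmless'' but requires restating with the appropriate $\Omega$. With those two repairs made explicit, your argument is complete and coincides with the intended one.
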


\subsection{Composition-Diamond lemma}

Let $Y$ be a set and let $\star$ be a symbol not in $Y$. Let $Y^{\star}:=Y\cup\{\star\}$. By a {\bf $\star$-$\Omega$-word}, we mean that any word in $\frakc(Y^\star)$ with exactly one occurrence of $\star$. The set of all $\star$-$\Omega$-words on $Y$ is denoted by $\frakc^\star(Y).$

Let $q\in \frakc^\star(Y)$ and let $s\in \bfk\frakc(Y)$.  We will use the notation $q|_s$ or $q|_{\star\mapsto s}$ to denote the $\Omega$-word on $Y$ obtained by replacing $\star$ in $q$ by $s$.

We now suppose that $\frakc(Y)$ is equipped with a monomial order $\leq$.  This means that $\leq$ is well order on $\frakc(Y)$ such that for any $u, v\in \frakc(Y)$ and any $q\in \frakc^\star(Y)$,
$$u \leq v\Rightarrow q|_u \leq q|_v.$$

For each commutative $\Omega$-polynomial $f\in  \bfk\frakc(Y)$, we use the notation  $\bar{f}$ to denote  the leading monomial of $f$ with respect to the monomial order $\leq$.  We call $f$ {\bf monic} if the coefficient of $\bar{f}$ is 1.
\begin{defn}Let $\leq$ be a monomial order on $\frakc(Y)$.  Let $ f , g \in\bfk\frakc(Y)$ be distinct monic $\Omega$-polynomials.
\begin{enumerate}
\item
If there exist $\omega, \mu, \nu\in \frakc(Y)$ such that $\omega=\bar{f}\mu=\nu\bar{g}$ with $\max\{\,|\bar{f}|,\,|\bar{g}|\,\}<|\omega| < |\bar{f}|+ |\bar{g}|$,  we call  $$(f , g)^{\mu,\nu}_\omega:= f\mu-\nu g $$ the {\bf intersection composition of  $f$ and $g$ with respect to $(\mu,\nu)$}.
\item
If there exist $q\in \frakc^{\star}(Y)$ and $\omega \in \frakc(Y)$ such that $\omega = \bar{f}= q|_{\bar{g}}$ , we call
$$(f , g)^{q}_\omega: = f - q|_g$$ the {\bf including composition of $f$ and $g$  with respect to $q$}.
\end{enumerate}
\end{defn}
The $\Omega$-word $\omega$ presented in the above definition is called the {\bf ambiguity} of $(f , g)^{\mu,\nu}_\omega$ or $(f , g)^{q}_\omega$.
\delete{Denote by $\Id(f, g) $ is the ideal of $\bfk\frakc(Y)$ generated by $f$ and $g$. Then
\begin{eqnarray*}
(f, g)^{\mu,\nu}_w, (f,g)^{q}_w\in \Id(f, g) \quad \text{and} \quad  \overline{(f, g)^{\mu,\nu}_w},\, \overline{(f,g)^{q}_w}<w,
\end{eqnarray*}
}
\begin{defn}
Let $\leq$ be a monomial order on $\frakc(Y)$.  Let $S$ be a set of monic commutative $\Omega$-polynomials in $\bfk\frakc(Y)$. Let $\omega\in\frakc(Y)$.
\begin{enumerate}
 \item A  commutative $\Omega$-polynomial $f$ is called {\bf trivial modulo\,$(S, \omega)$} if
\begin{eqnarray*}
f=\sum_i c_{i} q_{i}|_{s_{i}},
\end{eqnarray*}
where $c_{i}\in \bfk$, $q_{i}\in \frakc^{\star}(Y)$, $s_{i}\in S$ and $q_{i}|_{\bar{s_{i}}}<\omega$,   and we denote it by
 \begin{eqnarray*}
f\equiv 0 \mod(S, \omega).
 \end{eqnarray*}
\item
For any commutative $\Omega$-polynomials $f$ and $g$, a pair $(f,g)$ is called {\bf congruent modulo $(S,\omega)$}, denoted by $f\equiv g\mod(S, \omega)$, if $f-g$ is trivial modulo $(S,\omega)$.
\item
The set $S$ is called a {\bf Gr\"obner-Shirshov basis with respective to $\leq$}  if for all $f,g\in S$, both
$(f, g)^{\mu,\nu}_\omega$ and $(f,g)^{u}_\omega$ are trivial modulo\, $(S,\omega)$.
\end{enumerate}
\end{defn}

\begin{theorem}$(${\rm Composition-Diamond Lemma}$)$\cite[Thoerem 3.2]{Qiu14}\mlabel{thm:cdl}
  Let $S \subseteq\bfk\frakc(Y)$  be a set of monic commutative $\Omega$-polynomials. Let $\leq$ be a monomial order on $\frakc(Y)$. Then the following statements are equivalent:
  \begin{enumerate}
    \item $S$ is a Gr\"obner-Shirshov basis.
    \item As $\bfk$-vector spaces, $$\bfk\frakc(Y)=\bfk\Irr(S)\oplus \Id(S),$$
where $\Irr(S):=\frakc(Y)\backslash\{\,q|_{\bar{s}}\,|\,q\in\frakc^\ast(Y),s\in S\}$, and so $\Irr(S)$ is a $\bfk$-basis of $\bfk\frakc(Y)/\Id(S)$. \mlabel{it:gsb}
  \end{enumerate}

\end{theorem}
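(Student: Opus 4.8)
The plan is to prove the two implications separately, after recording a reduction principle valid for any monomial order. The preliminary observation is that, because $\leq$ is a well-order compatible with substitution into $\star$-$\Omega$-words, every $f\in\bfk\frakc(Y)$ reduces modulo $S$: whenever its leading monomial is reducible, say $\bar f=q|_{\bar s}$ with $q\in\frakc^{\star}(Y)$ and $s\in S$, subtracting the appropriate scalar multiple of $q|_{s}$ strictly lowers $\bar f$, and well-foundedness forces termination at an element of $\bfk\Irr(S)$. Hence $\bfk\frakc(Y)=\bfk\Irr(S)+\Id(S)$ holds unconditionally, so the theorem reduces to showing that the Gr\"obner-Shirshov condition (\textit{a}) is equivalent to directness of this sum, i.e. to $\bfk\Irr(S)\cap\Id(S)=0$.

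For (\mref{it:gsb}) $\Rightarrow$ (\textit{a}), assume the sum is direct and take any composition $h=(f,g)^{\mu,\nu}_\omega$ or $h=(f,g)^{q}_\omega$ with $f,g\in S$. By construction $h\in\Id(S)$ and $\bar h<\omega$. Reducing $h$ as above yields a normal form $\tilde h\in\bfk\Irr(S)$ with $\tilde h-h\in\Id(S)$, whence $\tilde h\in\bfk\Irr(S)\cap\Id(S)=0$; thus $h$ reduces to $0$. Since every reduction step rewrites the current polynomial at a leading monomial $w\leq\bar h<\omega$, tracking the steps expresses $h=\sum_i c_iq_i|_{s_i}$ with all $q_i|_{\bar{s_i}}<\omega$, that is $h\equiv0\mod(S,\omega)$. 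Hence every composition is trivial and $S$ is a Gr\"obner-Shirshov basis.

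For the converse (\textit{a}) $\Rightarrow$ (\mref{it:gsb}), it suffices to prove that every nonzero $h\in\Id(S)$ has reducible leading monomial, for then $\bar h\notin\Irr(S)$ forces $\bfk\Irr(S)\cap\Id(S)=0$. Write $h=\sum_i c_iq_i|_{s_i}$ with $c_i\in\bfk$, $q_i\in\frakc^{\star}(Y)$, $s_i\in S$, and among all such presentations fix one minimizing $\omega:=\max_i q_i|_{\bar{s_i}}$, which exists because $\leq$ is a well-order. If $\bar h=\omega$ then $\bar h=q_{i_0}|_{\bar{s_{i_0}}}$ is reducible and we are done; the point is to exclude $\bar h<\omega$. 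In that case the $\omega$-terms must cancel at the top, so at least two of them, say $c_1q_1|_{s_1}$ and $c_2q_2|_{s_2}$, satisfy $q_1|_{\bar{s_1}}=q_2|_{\bar{s_2}}=\omega$. Comparing the two placements of $\bar{s_1}$ and $\bar{s_2}$ inside the single $\Omega$-word $\omega$, and using the unique prime-$\Omega$-word factorization together with the breadth $|\cdot|$, one sees that the two subwords are either separated, overlapping, or nested.

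The technical heart — and the step I expect to be the main obstacle — is this overlap analysis. In the separated case the two reductions touch independent parts of $\omega$, and the commutativity of $\frakc(Y)$ lets me display $\omega$ as the value of one $\star$-$\Omega$-word into $s_1$ and $s_2$ simultaneously, so that $c_1q_1|_{s_1}+c_2q_2|_{s_2}$ rewrites with strictly smaller $\star$-insertions directly. In the overlapping and nested cases the relevant difference is, up to the ambient $\star$-context, precisely an intersection composition $(s_1,s_2)^{\mu,\nu}_\omega$ or an including composition $(s_1,s_2)^{q'}_\omega$; the hypothesis (\textit{a}) makes each trivial modulo $(S,\omega)$, so again $c_1q_1|_{s_1}+c_2q_2|_{s_2}$ rewrites as a combination of $q|_{s}$ with $q|_{\bar s}<\omega$. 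In every case this strictly decreases either the value $\omega$ or the number of terms attaining it, contradicting minimality. Therefore $\bar h=\omega$ is reducible, the sum is direct, and $\Irr(S)$ is a $\bfk$-basis of $\bfk\frakc(Y)/\Id(S)$, completing the equivalence.
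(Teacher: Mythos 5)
Your proposal is correct and takes the standard Composition-Diamond route: the unconditional decomposition $\bfk\frakc(Y)=\bfk\Irr(S)+\Id(S)$ by reduction and well-ordering, reduction of compositions to normal form for (\mref{it:gsb})$\Rightarrow$(\textit{a}), and the minimal-presentation argument with the separated/overlapping/nested case analysis for (\textit{a})$\Rightarrow$(\mref{it:gsb}). The paper itself gives no proof of Theorem~\mref{thm:cdl}, citing \cite{Qiu14} instead, and your argument is essentially the same as the proof in that cited source, so there is nothing substantive to contrast.
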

\subsection{A monomial order on $\frakc(Y)$}
In this section, we will give a monomial order on $\frakc(Y)$  when the well-ordered set $\Omega=\Omega_1=\{\lc\, \rc_{\rm d},\lc\, \rc_{\rm p}\}$  is composed of two distinct linear operators on $\bfk\frakc(Y)$ such that  $\lc\, \rc_{\rm d}>\lc\, \rc_{\rm p}$. We define ${\rm d}> {\rm p}$ if and only if $\lc\, \rc_{\rm d}>\lc\, \rc_{\rm p}$.
Note that an arbitrary elements of $\frakc(Y)$ has a unique expression, that is
\begin{equation}\mlabel{eq:unidec}
u=u_{0}\lc u^{\ast}_{1}\rc_{\omega_1} u_{1}\lc u^{\ast}_{2}\rc_{\omega_2} \cdots \lc u^{\ast}_{r}\rc_{\omega_r} u_{r},
\end{equation}
where $u_{0},u_{1},\cdots,u_{r}\in C(Y)$, $\lc u^{\ast}_{1}\rc_{\omega_1},\lc u^{\ast}_{2}\rc_{\omega_2},\cdots,\lc u^{\ast}_{r}\rc_{\omega_r}\in \Omega(\frakc(Y))$ and $\omega_i\in\{{\rm d},{\rm p}\}$ for $i=1,\cdots, r$. Then
\begin{equation}\mlabel{eq:breo}
|u|_{\Omega} :=r
\end{equation}
is called the {\bf $\Omega$-breath}  of $u$.
\begin{defn}
Let $u,v\in \frakc(Y)$.
  \begin{enumerate}
    \item Define
    \begin{eqnarray*}
    u\leq_{\dgo} v \,\Leftrightarrow\, \deg_{\Omega}(u)\leq \deg_{\Omega}(v),
    \end{eqnarray*}
    where $\deg_{\Omega}(u)$ is the number of occurrence of $\omega\in\Omega$ in $u$.
    \item Define
    \begin{eqnarray*}
    u\leq_{\bro}v\,\Leftrightarrow\, |u|_{\Omega}\leq |v|_{\Omega},
    \end{eqnarray*}
    where $|u|_{\Omega}$ (resp. $|v|_{\Omega}$) is  the $\Omega$-breadth of $u$ (resp. $v$) defined in Eq.~(\mref{eq:breo}).
  \end{enumerate}
\end{defn}

 We next construct a monomial order $\leq_{\db}$ on $\frakc(Y)$. Since $\frakc(Y)=\bigcup_{k\geq 0} \frakc_k$, we do this by induction on $k\geq 0$. Let $(Y,\leq)$ be a well-ordered set.
\begin{enumerate}
  \item
 Let $u, v \in \frakc_{0}(=C(Y))$.  Write
 $$u=u_1\cdots u_r\quad\text{and} \quad v=v_1\cdots v_s,$$
 where $u_1,\cdots, u_r, v_1,\cdots, v_s\in Y$.
  Then define
  \begin{eqnarray*}
  u\leq_0 v\, \Leftrightarrow\, u\leq_{\dlex}v,
  \end{eqnarray*}
  where $\leq_{\dlex}$ is the degree lexicographical order, that is,
 $$u\leq_{\dlex}v\, \Leftrightarrow\, (\deg_Y(u),u_1,\cdots,u_r)\leq (\deg_Y(v),v_1,\cdots, v_s).$$
Here  $\deg_{Y}(u)$ is the number of occurrence of $y\in Y$ in $u$.
  \item
For a given $k\geq 1$,  suppose that  a well order $\leq_k$ on $\frakc_k$ has been defined.   Let  $u, v \in \frakc_{k+1}$. Since
  $\frakc_{k+1}=C(Y\sqcup \Omega(\frakc_{k}))$, we can  write
$$
u=u_{0}\lfloor u^{\ast}_{1}\rfloor_{\alpha_1} u_{1}\lfloor u^{\ast}_{2}\rfloor_{\alpha_2} \cdots \lfloor u^{\ast}_{r}\rfloor_{\alpha_r} u_{r}\quad\text{and}\quad
v=v_{0}\lfloor v^{\ast}_{1}\rfloor_{\beta_1} v_{1}\lfloor v^{\ast}_{2}\rfloor_{\beta_2} \cdots \lfloor v^{\ast}_{s}\rfloor_{\beta_s} v_{s},
$$
where $u_{0},v_{0},u_{i},v_{j}\in C(Y)$, $\alpha_i,\beta_i\in \{{\rm d}, {\rm p}\}$ and $u^{\ast}_{i}, v^{\ast}_{j}\in \frakc_k$ for $i=1,\cdots,r$ and $j=1,\cdots,s$. We first assume that $r=s$. Then define
  \begin{eqnarray*}
  u\leq_{{\rm lex}_{k+1}}v \Leftrightarrow &(\alpha_1,\alpha_2,\cdots,\alpha_r,u^{\ast}_{1},u^{\ast}_{2},\cdots,u^{\ast}_{r},u_{0},u_{1},\cdots,u_{r})\\
  &\leq
  (\beta_1,\beta_2,\cdots,\beta_r,v^{\ast}_{1},v^{\ast}_{2},\cdots,v^{\ast}_{r},v_{0},v_{1},\cdots,v_{r}).
  \end{eqnarray*}
 Then by the induction hypothesis and \cite[Lemma~5.4.(b)]{ZGGW}, $\leq_{{\rm lex}_{k+1}}$ is a well order.  If $\Omega$ is a singleton, then $\alpha_i=\beta_i$ for $1\leq i\leq r$. So
 $\leq_{{\rm lex}_{k+1}}$ is the same as  the case of the order $\leq_{\rm db}$ defined in \cite[Lemma~5.5]{ZGGW}. This is the only difference between $\leq_\db$ and $\leq_{\rm db}$.

 Now define
  $$u\leq_{k+1}v\, \Leftrightarrow\,
  \left \{\begin{array}{ll} u<_{\dgo}v, \\
  \text{or}\;u=_{\dgo}v \;\text{and}\; u<_{\bro}v,  \\
  \text{or}\;u=_{\dgo}v,\; u=_{\bro}v(=r) \;\text{and}\; u<_{{\rm lex}_{k+1}}v.\\
  \end{array} \right .$$
\end{enumerate}
Then by \cite[Lemma~5.4.(a)]{ZGGW}, $\leq_{k+1}$ is a well order on $\frakc_{k+1}$.
Finally define the order
\begin{equation}\mlabel{eq:mord}
\leq_{\db}:=\bigcup_{k\geq 0}\leq_k.
\end{equation}
\begin{prop}The  order $\leq_{\db}$ is a monomial order on $\frakc(Y)$.
\end{prop}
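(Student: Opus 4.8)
The plan is to verify the two defining requirements of a monomial order separately: that $\leq_{\db}$ is a well order on $\frakc(Y)$, and that it is compatible with substitution, i.e. that $u\leq_{\db}v$ implies $q|_u\leq_{\db}q|_v$ for every $q\in\frakc^\star(Y)$. As a preliminary bookkeeping step, I would first check that the family $\{\leq_k\}_{k\geq 0}$ is nested, namely that the restriction of $\leq_{k+1}$ to $\frakc_k$ coincides with $\leq_k$. This follows by induction on $k$ from the recursive definition: the criteria $<_{\dgo}$ and $<_{\bro}$ are intrinsic (their values do not depend on the level at which an element is viewed), while the tie-breaker $\leq_{{\rm lex}_{k+1}}$ compares the inner words $u^\ast_i,v^\ast_j\in\frakc_k$ through $\leq_k$, which by the induction hypothesis already agrees with $\leq_{k-1}$ on $\frakc_{k-1}$. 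Consequently $\leq_{\db}=\bigcup_{k\geq 0}\leq_k$ is a well-defined linear order, and for any $u,v$ it agrees with $\leq_k$ as soon as $u,v\in\frakc_k$.

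To see that $\leq_{\db}$ is a well order I would rule out an infinite strictly descending chain. Since the first criterion in the definition of each $\leq_k$ is $\leq_{\dgo}$, any relation $u\leq_{\db}v$ forces $\deg_\Omega(u)\leq\deg_\Omega(v)$; hence along a descending chain the natural numbers $\deg_\Omega$ are weakly decreasing and therefore eventually constant, say equal to $N$. Every word $u$ with $\deg_\Omega(u)=N$ has bracket-nesting depth at most $N$, because a chain of nested operators of length $d$ consumes $d$ operator occurrences, so $\dep(u)\leq N$ and thus $u\in\frakc_N$. The tail of the chain therefore lies entirely in $\frakc_N$, on which $\leq_{\db}$ restricts to $\leq_N$; but $\leq_N$ is a well order by \cite[Lemma~5.4.(a)]{ZGGW}, contradicting the infinite descent. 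Hence $\leq_{\db}$ is a well order on $\frakc(Y)$.

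For substitution compatibility I would decompose $q\in\frakc^\star(Y)$ in the unique normal form of Eq.~\meqref{eq:unidec}: the single symbol $\star$ occurs either in one of the commutative factors from $C(Y^\star)$, in which case $q|_s=ws$ for a fixed $w\in\frakc(Y)$, or inside exactly one bracket $\lc q^\ast\rc_\omega$ with $q^\ast$ again a $\star$-word of strictly smaller size, in which case $q|_s=w\lc q^\ast|_s\rc_\omega$. By induction on the size of $q$ (base case $q=\star$, where $q|_s=s$), it therefore suffices to prove two elementary stability properties: (a) multiplication stability, $u\leq_{\db}v\Rightarrow wu\leq_{\db}wv$ for every fixed $w$; and (b) operator stability, $u\leq_{\db}v\Rightarrow\lc u\rc_\omega\leq_{\db}\lc v\rc_\omega$ for each $\omega\in\{{\rm d},{\rm p}\}$. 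Property (b) is checked tier by tier: applying $\lc\,\rc_\omega$ raises $\deg_\Omega$ by one on both sides and yields prime words of $\Omega$-breadth $1$, so the comparison descends to the inner entries, where $u\leq_{\db}v$ is exactly what is needed. For (a), $\deg_\Omega$ and the $\Omega$-breadth are additive under multiplication, so the cases $wu<_{\dgo}wv$ and $wu<_{\bro}wv$ are immediate and the remaining case reduces to the lexicographic tier.

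The main obstacle will be this lexicographic step in (a): forming $wu$ and $wv$ merges the multiset of bracketed factors of $w$ with those of $u$ and of $v$, and rewriting the products into the canonical form of Eq.~\meqref{eq:unidec} may reorder these factors, so one must verify that $u\leq_{{\rm lex}_{k+1}}v$ survives the insertion of the common factors of $w$. I expect this to be handled by exactly the bookkeeping of the singleton case \cite[Lemma~5.5]{ZGGW}, the only new feature being that the compared tuples now also record the operator labels $\alpha_i\in\{{\rm d},{\rm p}\}$, read first in the lexicographic order; since each label is tied to its factor and multiplication neither creates nor permutes labels within a factor, the monotonicity argument of \cite[Lemma~5.4.(b)]{ZGGW} carries over. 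Assembling (a) and (b) along the decomposition of $q$ then yields $q|_u\leq_{\db}q|_v$, which together with the well-order property shows that $\leq_{\db}$ is a monomial order on $\frakc(Y)$.
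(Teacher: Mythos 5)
Your proposal is correct and follows the same route the paper takes: the paper's entire proof is a one-line appeal to the proof of \cite[Theorem~5.8]{ZGGW}, and your argument is a faithful expansion of that proof, with the same two-part structure (well-orderedness of $\leq_{\db}=\bigcup_k\leq_k$ via the nestedness of the $\leq_k$ and the stabilization of $\deg_\Omega$ along a descending chain, plus compatibility with $q|_{\star}$ reduced to stability under multiplication and under each $\lc\,\rc_\omega$). The only step you defer --- survival of the lexicographic tie-breaker under merging and re-sorting of bracketed factors --- is exactly the step the paper also delegates to \cite[Lemmas~5.4--5.5]{ZGGW}, so nothing is missing relative to the paper's own standard of detail.
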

\begin{proof}
It follows from the same proof of \cite[Theorem~5.8]{ZGGW}.
\end{proof}

\section{Free commutative quasi-idempotent differential Rota-Baxter algebras}
\mlabel{sec:fcqa}
In this section  we will give a linear basis of free  commutative quasi-idempotent differential algebras (resp. Rota-Baxter algebras, resp. differential Rota-Baxter algebras) by using the Composition-Diamond lemma.
\subsection{Linear bases of free commutative quasi-idempotent differential algebras}
Firstly, this section will give a Gr\"obner-Shirshov basis for  free commutative quasi-idempotent differential algebras, leading to a linear basis of free commutative quasi-idempotent differential algebras. For this, $\Omega$ is taken to be the singleton $\{\lc\,\rc_{\rm d}\}$. We present our main result of this section as below.
\begin{theorem}\mlabel{thm:fcqd}Let $Y$ be a well-ordered set. Let $\lambda \neq 0$ and let
$$S_{\rm d}:=\left \{\begin{array}{lll}\,\phi_1(u,v):=\lc u\rc_{\rm d}\lc v\rc_{\rm d}+\lambda^{-1}\lc u\rc_{\rm d}v+\lambda^{-1}u\lc v\rc_{\rm d}-\lambda^{-1}\lc uv\rc_{\rm d}\\
 \phi_2(u,v):=\lc u \rc^2_{\rm d}+\lambda^{-1} \lc u\rc_{\rm d}
 \end{array}\bigg|\,u,v\in \frakc(Y)\right\}.
$$
With the monomial order $\leq_{\db}$ defined in Eq.~(\mref{eq:mord}),
\begin{enumerate}
\item $S_{\rm d}$ is a Gr\"obner-Shirshov basis in $\bfk\frakc(Y)$.\mlabel{it:gsd}
\item $$\Irr(S_{\rm d}):=\frakc(Y)\,\Big\backslash\,\Big\{q_1|_{\lc u\rc_{\rm d}\lc v\rc_{\rm d}}, q_2|_{\lc u\rc^2_{\rm d}}|\,q_1,q_2\in\frakc^\star(Y),u,v\in \frakc(Y)\Big\}$$
is a linear basis of the free commutative quasi-idempotent differential algebra on $Y$.\mlabel{it:fd}
\end{enumerate}
\mlabel{thm:d}
\end{theorem}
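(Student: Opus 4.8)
The plan is to invoke the Composition--Diamond Lemma (Theorem~\mref{thm:cdl}): statement~(\mref{it:fd}) is a formal consequence of~(\mref{it:gsd}), because once $S_{\rm d}$ is a Gr\"obner--Shirshov basis one has $\bfk\frakc(Y)=\bfk\Irr(S_{\rm d})\oplus\Id(S_{\rm d})$, and $\Id(S_{\rm d})$ is by construction the operated ideal $\Id_{\Phi_{\rm d}}(Y)$ that presents the free commutative quasi-idempotent differential algebra on $Y$ via Proposition~\mref{prop:drb}(\mref{it:fqd}). So the entire weight of the proof rests on~(\mref{it:gsd}). First I would record the leading monomials with respect to $\leq_{\db}$: since this order compares the operator count $\deg_{\Omega}$ first, the unique term of $\phi_1(u,v)$ carrying two brackets dominates, giving $\overline{\phi_1}=\lc u\rc_{\rm d}\lc v\rc_{\rm d}$ (of $\Omega$-breadth two), and likewise $\overline{\phi_2}=\lc\lc u\rc_{\rm d}\rc_{\rm d}$ (of $\Omega$-breadth one); both are monic, and they are exactly the excluded patterns in the description of $\Irr(S_{\rm d})$.

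Next I would enumerate all ambiguities between members of $S_{\rm d}$ and verify that each associated composition is trivial modulo $(S_{\rm d},\omega)$. Because $\overline{\phi_1}$ has breadth two and $\overline{\phi_2}$ breadth one, the overlaps fall into three kinds. The only genuine intersection composition is that of $\phi_1(u,a)$ with $\phi_1(a,v)$ sharing the prime $\lc a\rc_{\rm d}$, of ambiguity $\omega=\lc u\rc_{\rm d}\lc a\rc_{\rm d}\lc v\rc_{\rm d}$; expanding $\phi_1(u,a)\lc v\rc_{\rm d}-\lc u\rc_{\rm d}\phi_1(a,v)$ the two leading words cancel, the four surviving reducible products $\lc\,\rc_{\rm d}\lc\,\rc_{\rm d}$ are each rewritten once more by $\phi_1$, and collecting the six resulting monomials they pair off with opposite signs and vanish, so this composition is trivial. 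A second, routine family consists of the ``deep'' including compositions in which $\overline{\phi_2}$ or $\overline{\phi_1}$ sits strictly inside the bracket of another $\phi_2$: the inner reduction commutes with the outer operator, so the composition drops below $\omega$ at once.

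The main obstacle is the remaining including composition, which pits the Leibniz relation against quasi-idempotency. Pairing $\phi_1(\lc a\rc_{\rm d},v)$ with $\phi_2(a)$, with $q=\star\lc v\rc_{\rm d}$ and $\omega=\lc\lc a\rc_{\rm d}\rc_{\rm d}\lc v\rc_{\rm d}$, gives $(f,g)^q_\omega=\lambda^{-1}\lc\lc a\rc_{\rm d}\rc_{\rm d}v-\lambda^{-1}\lc\lc a\rc_{\rm d}v\rc_{\rm d}$. The first summand reduces by $\phi_2$ to $-\lambda^{-2}\lc a\rc_{\rm d}v$, but the second, $\lc\lc a\rc_{\rm d}v\rc_{\rm d}$, is itself $\leq_{\db}$-irreducible, so the whole difficulty is to certify that it represents $-\lambda^{-1}\lc a\rc_{\rm d}v$ modulo $(S_{\rm d},\omega)$. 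This is precisely the derived identity $d(d(a)v)=-\lambda^{-1}d(a)v$: expanding $d(d(a)\cdot v)$ by the weight-$\lambda$ Leibniz rule produces the cross-term $\lambda\,d^2(a)d(v)$, which cancels $d(a)d(v)$ exactly because $d^2=-\lambda^{-1}d$, so the reciprocal weights conspire. At the level of $\Omega$-words one produces this cancellation by feeding in the $d$-operated instance $\lc\phi_1(a,v)\rc_{\rm d}$, whose leading word $\lc\lc a\rc_{\rm d}\lc v\rc_{\rm d}\rc_{\rm d}$ has $\Omega$-breadth one and hence lies strictly below $\omega$. I expect this to be the crux of the argument: one must confirm that the residues so generated close up under $\phi_1$ and $\phi_2$ rather than cycle --- equivalently, that the family $\phi_2$ already subsumes every instance of the derived rewriting $\lc\lc u\rc_{\rm d}w\rc_{\rm d}\mapsto-\lambda^{-1}\lc u\rc_{\rm d}w$ --- and it is here that the precise coefficients of $\phi_1$ and $\phi_2$ are indispensable. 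Once all ambiguities are cleared in this way, Theorem~\mref{thm:cdl} delivers~(\mref{it:gsd}) and with it~(\mref{it:fd}).
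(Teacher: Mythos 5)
Your architecture is right, and your instinct about where the difficulty lies is sharper than the paper's own treatment, but the proof has a genuine gap at exactly the point you flag as the crux --- and that step cannot be repaired as stated. The including composition of $f=\phi_1(\lc a\rc_{\rm d},v)$ on $g=\phi_2(a)$ with $q'=\star\lc v\rc_{\rm d}$ and $\omega=\lc\lc a\rc_{\rm d}\rc_{\rm d}\lc v\rc_{\rm d}$, which you correctly compute to be $\lambda^{-1}\lc\lc a\rc_{\rm d}\rc_{\rm d}v-\lambda^{-1}\lc\lc a\rc_{\rm d}v\rc_{\rm d}$, is \emph{not} trivial modulo $(S_{\rm d},\omega)$: no element of $S_{\rm d}$ has leading word $\lc\lc u\rc_{\rm d}w\rc_{\rm d}$ with $w\neq 1$ (the family $\phi_2$ covers only $w=1$), and your proposed input $\lc\phi_1(a,v)\rc_{\rm d}$ merely cycles, since reducing the term $\lc\lc a\rc_{\rm d}\lc v\rc_{\rm d}\rc_{\rm d}$ it introduces regenerates $\lc\lc a\rc_{\rm d}v\rc_{\rm d}$ with the same coefficient. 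Concretely, for $a,v\in Y$ set
$$h:=\lc\lc a\rc_{\rm d}v\rc_{\rm d}+\lambda^{-1}\lc a\rc_{\rm d}v=-\lambda\Big(\phi_1(\lc a\rc_{\rm d},v)-\phi_2(a)\lc v\rc_{\rm d}-\lambda^{-1}\phi_2(a)\,v\Big).$$
This is the $\Omega$-word form of the derived identity $d(d(a)v)=-\lambda^{-1}d(a)v$ that you yourself wrote down; it is a nonzero element of $\Id(S_{\rm d})$, yet both of its monomials lie in $\Irr(S_{\rm d})$, so $\bfk\,\Irr(S_{\rm d})\cap\Id(S_{\rm d})\neq 0$. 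By Theorem~\mref{thm:cdl} it follows that $S_{\rm d}$ is \emph{not} a Gr\"obner--Shirshov basis and that $\Irr(S_{\rm d})$ is not linearly independent in the quotient. To salvage the argument one must complete $S_{\rm d}$, e.g.\ by adjoining the relations $\lc\lc u\rc_{\rm d}w\rc_{\rm d}+\lambda^{-1}\lc u\rc_{\rm d}w$, and then redo both the composition check and the description of the irreducible words.

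For comparison with the paper's proof: the paper verifies the $\phi_1$--$\phi_1$ intersection composition in detail (as you sketch) and relegates the rest to a table, but that table lists only the ``deep'' inclusions $\lc q|_{\lc u\rc^2_{\rm d}}\rc_{\rm d}\lc v\rc_{\rm d}$ in which $\lc u\rc^2_{\rm d}$ sits strictly inside a bracket of $\overline{\phi_1}$; the overlap you isolate, where $\lc u\rc^2_{\rm d}$ \emph{is} one of the two factors of $\overline{\phi_1}$, does not appear there at all. (The analogous overlap in the Rota--Baxter case, $\lc\lc u\rc_{\rm p}\rc_{\rm p}\lc v\rc_{\rm p}$, does resolve trivially, which is why the problem is invisible in Theorem~\mref{thm:fcqrb}.) So you have correctly located the one composition on which everything hinges, but your proposal stops at expressing the expectation that it closes up, and that expectation is false.
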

\begin{proof}
(\mref{it:gsd})
  Denote by $i\wedge j$ the composition of $\Omega$-polynomials of types $\phi_i(u,v)$ and $\phi_j(u,v)$ for $i,j=1,2$. The ambiguities of all possible compositions of commutative $\Omega$-polynomials in $S$ are presented as below.
 $$
\begin{tabular}{|c|c|c|c|c|c|}
\hline
\diagbox{i}{$i\wedge j$}{j}&1&2\\
\hline
1&\makecell{$\lc u\rc_{\rm d}\lc v\rc_{\rm d}\lc w\rc_{\rm d}$\\
$\lc q|_{\lc u\rc_{\rm d}\lc v\rc_{\rm d}}\rc_{\rm d}\lc w\rc_{\rm d}$}&$\lc q|_{\lc u\rc^{2}_{\rm d}}\rc_{\rm d}\lc v\rc_{\rm d}$\\
\hline
2&$\lc  q|_{\lc u\rc_{\rm d}\lc v\rc_{\rm d}}\rc^2_{\rm d}$&$\lc q|_{\lc u\rc^2_{\rm d}}\rc^2_{\rm d}$\\
\hline
\end{tabular}
$$
We only verify the first case: $1\wedge 1$. The other cases are easy to check.
 Let
\begin{eqnarray*}
f:=\phi_1(u,v)=\lc u\rc_{\rm d}\lc v\rc_{\rm d}+\lambda^{-1}\lc u\rc_{\rm d}v+\lambda^{-1}u\lc v\rc_{\rm d}-\lambda^{-1}\lc  uv\rc_{\rm d},\\
g:=\phi_1(z,w)=\lc z\rc_{\rm d}\lc w\rc_{\rm d}+\lambda^{-1}\lc z\rc_{\rm d}w+\lambda^{-1}z\lc w\rc_{\rm d}-\lambda^{-1}\lc  zw\rc_{\rm d}.
\end{eqnarray*}
By the monomial order $\leq_{\db}$, we obtain $\bar{f}=\lc u\rc_{\rm d}\lc v\rc_{\rm d}$ and $\bar{g}=\lc z\rc_{\rm d}\lc w\rc_{\rm d}$.
\noindent

\smallskip
{\bf(The case of intersection compositions)}. Suppose that $\omega=\bar{f}\mu= \nu\bar{g}$  with $\max\{\,|\bar{f}|,\,|\bar{g}|\,\}<|\omega| < |\bar{f}|+ |\bar{g}|$. Then $\mu=\lc w\rc_{\rm d}$ and $\nu=\lc u\rc_{\rm d}$. Thus $v=z$, and $\omega=\lc u\rc_{\rm d}\lc v\rc_{\rm d}\lc w\rc_{\rm d}$.
{\allowdisplaybreaks
\begin{eqnarray*}
(f,g)_{\omega}^{\mu,\nu}&=&f\mu-\nu g\\
&=&\Big(\lc u\rc_{\rm d}\lc v\rc_{\rm d}+\lambda^{-1}\lc u\rc_{\rm d}v+\lambda^{-1}u\lc v\rc_{\rm d}-\lambda^{-1}\lc  uv\rc_{\rm d}\Big)\lc w\rc_{\rm d}\\
&&-\lc u\rc_{\rm d}\Big(\lc v\rc_{\rm d}\lc w\rc_{\rm d}+\lambda^{-1}\lc v\rc_{\rm d}w+\lambda^{-1}v\lc w\rc_{\rm d}-\lambda^{-1}\lc  vw\rc_{\rm d}\Big)\\
&=&\lambda^{-1}u\lc v\rc_{\rm d}\lc w\rc_{\rm d}-\lambda^{-1}\lc  uv\rc_{\rm d}\lc w\rc_{\rm d}-\lambda^{-1}\lc u\rc_{\rm d}\lc v\rc_{\rm d}w+\lambda^{-1}\lc u\rc_{\rm d}\lc  vw\rc_{\rm d}\\
&\equiv&\lambda^{-1}u\Big(-\lambda^{-1}\lc v\rc_{\rm d}w-\lambda^{-1}v\lc w\rc_{\rm d}+\lambda^{-1}\lc  vw\rc_{\rm d}\Big)\\
&&-\lambda^{-1}\Big(-\lambda^{-1}\lc  uv\rc_{\rm d}w-\lambda^{-1}uv\lc w\rc_{\rm d}+\lambda^{-1}\lc  uvw\rc_{\rm d}\Big)\\
&&-\lambda^{-1}\Big(-\lambda^{-1}\lc u\rc_{\rm d}v-\lambda^{-1}u\lc v\rc_{\rm d}+\lambda^{-1}\lc  uv\rc_{\rm d}\Big)w\\
&&+\lambda^{-1}\Big(-\lambda^{-1}\lc u\rc_{\rm d}vw-\lambda^{-1}u\lc  vw\rc_{\rm d}+\lambda^{-1}\lc  uvw\rc_{\rm d}\Big)\\
&\equiv&-\lambda^{-2}u\lc v\rc_{\rm d}w-\lambda^{-2}uv\lc w\rc_{\rm d}+\lambda^{-2}u\lc  vw\rc_{\rm d}\\
&&+\lambda^{-2}\lc  uv\rc_{\rm d}w+\lambda^{-2}uv\lc w\rc_{\rm d}-\lambda^{-2}\lc uvw\rc_{\rm d}\\
&&+\lambda^{-2}\lc u\rc_{\rm d}vw+\lambda^{-2}u\lc v\rc_{\rm d}w-\lambda^{-2}\lc  uv\rc_{\rm d}w\\
&&-\lambda^{-2}\lc u\rc_{\rm d}vw-\lambda^{-2}u\lc  vw\rc_{\rm d}+\lambda^{-2}\lc  uvw\rc_{\rm d}\\
&\equiv&0\quad\mod(S_{\rm d},\omega).
\end{eqnarray*}
}
Thus $(f,g)_{\omega}^{\mu,\nu}$ is trivial modulo $(S_{\rm d},\omega)$.
\noindent

\smallskip
{\bf(The case of including compositions)}. Suppose that $\omega=\bar{f}= q'|_{\bar{g}}$. In order to keep consistent with notations of variables in the above table, we set $f:=\phi_1(z,w)$ and $g:=\phi_1(u,v)$.  This gives $\lc z\rc_{\rm d}\lc w\rc_{\rm d}=q'|_{\lc u\rc_{\rm d}\lc v\rc_{\rm d}}$. If $q'=\star$, then $\lc z\rc_{\rm d}=\lc u\rc_{\rm d}$ and $\lc w\rc_{\rm d}=\lc v\rc_{\rm d}$. This gives $(f,g)_{\omega}^{q'}=0$,  which  is trivial modulo $(S_{\rm d},\omega)$. Now let $q'\neq \star$. Then $q'=\lc q\rc_{\rm d}\lc w\rc_{\rm d}$ or $q'=\lc z\rc_{\rm d}\lc q\rc_{\rm d}$, where $q$ is in $\frakc^\star(Y)$. Thus $z=q|_{\lc u\rc_{\rm d}\lc v\rc_{\rm d}}$  or $w=q|_{\lc u\rc_{\rm d}\lc v\rc_{\rm d}}$.  Since the multiplication of $\bfk\frakc(Y)$ is commutative, we only consider the first case when $z=q|_{\lc u\rc_{\rm d}\lc v\rc_{\rm d}}$. Thus $\omega=\lc q|_{\lc u\rc_{\rm d}\lc v\rc_{\rm d}}\rc_{\rm d}\lc w\rc_{\rm d}$ and $q'=\lc q\rc_{\rm d}\lc w\rc_{\rm d}$.
{\allowdisplaybreaks
\begin{eqnarray*}
(f,g)_{\omega}^{q'}&=&f-q'|_g\\
&=&\lc q|_{\lc u\rc_{\rm d}\lc v\rc_{\rm d}}\rc_{\rm d}\lc w\rc_{\rm d}+\lambda^{-1}\lc q|_{\lc u\rc_{\rm d}\lc v\rc_{\rm d}}\rc_{\rm d}w
+\lambda^{-1}q|_{\lc u\rc_{\rm d}\lc v\rc_{\rm d}}\lc w\rc_{\rm d}\\
&&-\lambda^{-1}\lc q|_{\lc u\rc_{\rm d}\lc v\rc_{\rm d}}w\rc_{\rm d}-\lc q|_{\lc u\rc_{\rm d}\lc v\rc_{\rm d}+\lambda^{-1}\lc u\rc_{\rm d}v+\lambda^{-1}u\lc v\rc_{\rm d}-\lambda^{-1}\lc uv\rc_{\rm d}}\rc_{\rm d}\lc w\rc_{\rm d}\\
&=&\lambda^{-1}\lc q|_{\lc u\rc_{\rm d}\lc v\rc_{\rm d}}\rc_{\rm d}w+\lambda^{-1}q|_{\lc u\rc_{\rm d}\lc v\rc_{\rm d}}\lc w\rc_{\rm d}-\lambda^{-1}\lc q|_{\lc u\rc_{\rm d}\lc v\rc_{\rm d}}w\rc_{\rm d}\\
&&-\lambda^{-1}\lc q|_{\lc u\rc_{\rm d}v}\rc_{\rm d}\lc w\rc_{\rm d}-\lambda^{-1}\lc q|_{u\lc v\rc_{\rm d}}\rc_{\rm d}\lc w\rc_{\rm d}+\lambda^{-1}\lc q|_{\lc uv\rc_{\rm d}}\rc_{\rm d}\lc w\rc_{\rm d}\\
&\equiv&-\lambda^{-2}\lc q|_{\lc u\rc_{\rm d}v}\rc_{\rm d}w-\lambda^{-2}\lc q|_{u\lc v\rc_{\rm d}}\rc_{\rm d}w+\lambda^{-2}\lc q|_{\lc uv\rc_{\rm d}}\rc_{\rm d}w\\
&&-\lambda^{-2}q|_{\lc u\rc_{\rm d}v}\lc w\rc_{\rm d}-\lambda^{-2}q|_{u\lc v\rc_{\rm d}}\lc w\rc_{\rm d}+\lambda^{-2}q|_{\lc uv\rc_{\rm d}}\lc w\rc_{\rm d}\\
&&+\lambda^{-2}\lc q|_{\lc u\rc_{\rm d}v}w\rc_{\rm d}+\lambda^{-2}\lc q|_{u\lc v\rc_{\rm d}}w\rc_{\rm d}-\lambda^{-2}\lc q|_{\lc uv\rc_{\rm d}}w\rc_{\rm d}\\
&&+\lambda^{-2}\lc q|_{\lc u\rc_{\rm d}v}\rc_{\rm d}w+\lambda^{-2}q|_{\lc u\rc_{\rm d}v}\lc w\rc_{\rm d}-\lambda^{-2}\lc q|_{\lc u\rc_{\rm d}v}w\rc_{\rm d}\\
&&+\lambda^{-2}\lc q|_{u\lc v\rc_{\rm d}}\rc_{\rm d}w+\lambda^{-2}q|_{u\lc v\rc_{\rm d}}\lc w\rc_{\rm d}-\lambda^{-2}\lc q|_{u\lc v\rc_{\rm d}}w\rc_{\rm d}\\
&&-\lambda^{-2}\lc q|_{\lc uv\rc_{\rm d}}\rc_{\rm d}w-\lambda^{-2}q|_{\lc uv\rc_{\rm d}}\lc w\rc_{\rm d}+\lambda^{-2}\lc q|_{\lc uv\rc_{\rm d}}w\rc_{\rm d}\\
&\equiv&0\quad\mod(S_{\rm d},\omega).
\end{eqnarray*}
}
Thus $(f,g)_{\omega}^{q'}$ is trivial modulo $(S_{\rm d},\omega)$.
\noindent

\smallskip
(\mref{it:fd})
By Proposition~\mref{prop:drb}~(\mref{it:fqd}), $\bfk\frakc(Y)/\Id(S_{\rm d})$ is the free quasi-idempotent differential algebra on $Y$.  Then by Item~(\mref{it:gsd}) and Theorem~\mref{thm:cdl}~(\mref{it:gsb}), $\Irr(S_{\rm d})$ is a linear basis of $\bfk\frakc(Y)/\Id(S_{\rm d})$.
\end{proof}

\subsection{Linear bases of free commutative quasi-idempotent Rota-Baxter algebras}
In order to obtain a linear basis of free commutative quasi-idempotent Rota-Baxter algebras, we will establish a Gr\"obner-Shirshov basis for such algebras. For this, $\Omega$ is taken to be the singleton $\{\lc\,\rc_{\rm p}\}$.
\begin{theorem}\mlabel{thm:fcqrb}Let $Y$ be a well-ordered set. Let
$$ S_{\rm rb}:=\left\{\begin{array}{lll}\phi_3(u,v): =\lc u\rc_{\rm p} \lc v\rc_{\rm p}-\lc u\lc v\rc_{\rm p}\rc_{\rm p}-\lc \lc u\rc_{\rm p}v\rc_\rp-\lambda \lc uv\rc_\rp\\
 \phi_4(u,v): =\lc u\rc^2_\rp+\lambda \lc u\rc_\rp\end{array}\bigg|\,u,v\in \frakc(Y)\right\}.
$$
With the monomial order $\leq_{\db}$ defined in Eq.~(\mref{eq:mord}),
\begin{enumerate}
\item $S_{\rm rb}$ is a Gr\"obner-Shirshov basis in $\bfk\frakc(Y)$.
\mlabel{it:gsrb}
\item $$\Irr(S_{\rm rb}):=\frakc(Y)\,\Big\backslash\, \Big\{q_1|_{\lc u\rc_{\rm p}\lc v\rc_{\rm p}}, q_2|_{\lc u\rc^2_\rp}|\,q_1,q_2\in\frakc^\star(Y),u,v\in \frakc(Y)\Big\}$$
is a linear basis of the free  commutative quasi-idempotent Rota-Baxter algebra on $Y$.
\mlabel{it:frb}
\end{enumerate}
\mlabel{thm:rb}
\end{theorem}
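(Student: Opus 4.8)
The plan is to mirror the proof of Theorem~\mref{thm:d} in structure, substituting the Rota-Baxter relations $\phi_3,\phi_4$ for the differential relations $\phi_1,\phi_2$. First I would pin down the leading monomials under $\leq_{\db}$. Because this order compares first by $\deg_{\Omega}$ and then by the $\Omega$-breadth, the term $\lc u\rc_\rp\lc v\rc_\rp$, of $\Omega$-breadth $2$, is the $\leq_{\db}$-largest among the four terms of $\phi_3$ (the other three all having $\Omega$-breadth $1$), so $\overline{\phi_3}=\lc u\rc_\rp\lc v\rc_\rp$; similarly $\overline{\phi_4}=\lc u\rc^2_\rp$ since $\deg_{\Omega}(\lc u\rc^2_\rp)=2>1=\deg_{\Omega}(\lc u\rc_\rp)$. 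With these leading terms, the possible ambiguities are exactly the Rota-Baxter analogues of the differential table: for the pair $3\wedge 3$ the intersection ambiguity $\lc u\rc_\rp\lc v\rc_\rp\lc w\rc_\rp$ and the including ambiguity $\lc q|_{\lc u\rc_\rp\lc v\rc_\rp}\rc_\rp\lc w\rc_\rp$; and the including ambiguities $\lc q|_{\lc u\rc^2_\rp}\rc_\rp\lc v\rc_\rp$, $\lc q|_{\lc u\rc_\rp\lc v\rc_\rp}\rc^2_\rp$ and $\lc q|_{\lc u\rc^2_\rp}\rc^2_\rp$ for the pairs $3\wedge 4$, $4\wedge 3$ and $4\wedge 4$ respectively.

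For part~(\mref{it:gsrb}) I would then reduce each composition to zero modulo $(S_{\rm rb},\omega)$. The compositions involving $\phi_4$ are quick: the rule $\phi_4$ rewrites any factor $\lc u\rc^2_\rp$ to $-\lambda\lc u\rc_\rp$, so the two reduction paths in the $3\wedge 4$, $4\wedge 3$ and $4\wedge 4$ cases collapse after a couple of substitutions, just as their differential counterparts did. The main work, and the main obstacle, is the $3\wedge 3$ intersection composition with $\mu=\lc w\rc_\rp$, $\nu=\lc u\rc_\rp$, $v=z$, and $\omega=\lc u\rc_\rp\lc v\rc_\rp\lc w\rc_\rp$, where one forms $f\mu-\nu g$ and reduces $\lc u\rc_\rp\lc v\rc_\rp\lc w\rc_\rp$ along both outer factorizations. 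Unlike the differential case, applying $\phi_3$ produces nested terms such as $\lc u\lc v\rc_\rp\rc_\rp\lc w\rc_\rp$ and $\lc u\rc_\rp\lc\lc v\rc_\rp w\rc_\rp$, each of which is again a product of two $\rp$-operator prime words and hence reducible a second time; the cancellation only emerges after these nested operators are fully expanded and the commutativity of $\bfk\frakc(Y)$ is used to match terms. This step is precisely the confluence check encoding the coherence of the commutative Rota-Baxter operator, and care is needed to confirm that every intermediate monomial stays strictly below $\omega$ so that each rewrite is legitimate modulo $(S_{\rm rb},\omega)$.

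The including composition for $3\wedge 3$ is handled exactly along the template of Theorem~\mref{thm:d}: writing $q'=\lc q\rc_\rp\lc w\rc_\rp$ or $\lc z\rc_\rp\lc q\rc_\rp$, the commutativity of $\bfk\frakc(Y)$ reduces it to the single case $z=q|_{\lc u\rc_\rp\lc v\rc_\rp}$, after which the two expansions of $\omega=\lc q|_{\lc u\rc_\rp\lc v\rc_\rp}\rc_\rp\lc w\rc_\rp$ cancel term by term. Finally, part~(\mref{it:frb}) is immediate from the general machinery: by Proposition~\mref{prop:drb}~(\mref{it:fqrb}) the quotient $\bfk\frakc(Y)/\Id(S_{\rm rb})$ is the free commutative quasi-idempotent Rota-Baxter algebra on $Y$, and combining part~(\mref{it:gsrb}) with the Composition-Diamond Lemma (Theorem~\mref{thm:cdl}~(\mref{it:gsb})) yields that $\Irr(S_{\rm rb})$ is a $\bfk$-linear basis of this quotient. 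I expect no difficulty in this last step, since it is a verbatim application of the two cited results.
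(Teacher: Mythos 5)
Your proposal is correct and follows essentially the same route as the paper: identify the leading monomials under $\leq_{\db}$, check the four families of compositions, and conclude via Proposition~\ref{prop:drb}~(\ref{it:fqrb}) and the Composition-Diamond Lemma. The only difference is one of emphasis: the paper dispenses with the $3\wedge 3$ compositions by citing \cite[Theorem~4.1]{Qiu14} (the Gr\"obner-Shirshov basis for ordinary commutative Rota-Baxter algebras) and instead writes out the $3\wedge 4$ including composition, whereas you plan to verify $3\wedge 3$ by hand — a legitimate but redundant computation.
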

\begin{proof}
(\mref{it:gsrb})
We also  denote by $i\wedge j$ the composition of $\Omega$-polynomials of types $\phi_i(u,v)$ and $\phi_j(u,v)$ for $i,j=3,4$. The ambiguities of all possible compositions of commutative $\Omega$-polynomials in $S$ are presented as below.
$$
\begin{tabular}{|c|c|c|}
\hline
\diagbox{i}{$i\wedge j$}{j}&3&4\\
\hline
3& \makecell{$\lc u\rc_{\rm p}\lc v\rc_{\rm p}\lc w\rc_\rp$\\
$\lc q|_{\lc u\rc_{\rm p}\lc v\rc_{\rm p}}\rc_\rp \lc w\rc_\rp $}&$\lc q|_{\lc u\rc^2_\rp}\rc_\rp \lc v\rc_{\rm p}$\\
\hline
4&$\lc q|_{\lc u\rc_{\rm p}\lc v\rc_{\rm p}}\rc^2_\rp$&$\lc q|_{\lc u\rc^2_\rp}\rc^2_\rp$\\
\hline
\end{tabular}
$$
The first case $3\wedge 3$ has been verified in~\cite[Theorem~4.1]{Qiu14}.  We now check $3\wedge 4$. The others are similar.
 Let
\begin{eqnarray*}
&&f:=\phi_3(z,v)=\lc z\rc_\rp\lc v\rc_{\rm p}-\lc z\lc v\rc_{\rm p}\rc_\rp-\lc\lc z \rc_\rp v\rc_\rp-\lambda \lc zv\rc_\rp\\
&&g:=\phi_4(u,w)=\lc u\rc^2_\rp+\lambda \lc u\rc_{\rm p}.
\end{eqnarray*}
By the monomial order $\leq_{\db}$, we obtain $\bar{f}=\lc z\rc_\rp\lc v\rc_{\rm p}$ and $\bar{g}=\lc u\rc^2_\rp$.
\noindent

\smallskip
{\bf(The case of intersection compositions)}. Suppose that $\omega=\bar{f}\mu=\nu \bar{g}$  with $\max\{\,|\bar{f}|,\,|\bar{g}|\,\}<|\omega| < |\bar{f}|+ |\bar{g}|$. Then $2<|\omega|<3$, contradiction. Thus there are no  intersection compositions of $f$ and $g$.

{\bf(The case of including compositions)}. Suppose that $\omega=\bar{f}= q'|_{\bar{g}}$. This gives $\lc z\rc_\rp\lc v\rc_{\rm p}=q'|_{\lc u\rc^2_\rp}$.  Then $q'=\lc q\rc_\rp \lc v\rc_{\rm p}$ or $q'=\lc z\rc_\rp \lc q\rc_\rp$, where $q$ is in $\frakc^\star(Y)$. Thus $z=q|_{\lc u\rc^2_\rp}$  or $v=q|_{\lc u\rc^2_\rp}$. Since the multiplication of $\bfk\frakc(Y)$ is commutative, we only check the fist case when $\omega=\lc q|_{\lc u\rc^2_\rp}\rc_\rp\lc v\rc_{\rm p}$ and $q'=\lc q\rc_\rp\lc v\rc_{\rm p}$.
Then
\begin{eqnarray*}
(f,g)_\omega^{q'}&=&f-q'|_g\\
&=& \lc q|_{\lc u\rc^2_\rp}\rc_\rp \lc v\rc_{\rm p}-\lc q|_{\lc u\rc^2_\rp}\lc v\rc_{\rm p}\rc_\rp-\lc \lc q|_{\lc u\rc^2_\rp}\rc_\rp v\rc_\rp\\
&&-\lambda \lc q|_{\lc u\rc^2_\rp}v\rc_\rp-\lc q|_{\lc u\rc^2_\rp+\lambda \lc u\rc_{\rm p}}\rc_\rp \lc v\rc_{\rm p}\\
  &=&-\lc q|_{\lc u\rc^2_\rp}\lc v\rc_{\rm p}\rc_\rp-\lc \lc q|_{\lc u\rc^2_\rp}\rc_\rp v\rc_\rp-\lambda \lc q|_{\lc u\rc^2_\rp }v\rc_\rp-\lambda \lc q|_{\lc u\rc_{\rm p}}\rc_\rp\lc v\rc_{\rm p}\\
  &\equiv&\lambda \lc q|_{\lc u\rc_{\rm p}}\lc v\rc_{\rm p}\rc_\rp+\lambda \lc \lc q|_{\lc u\rc_{\rm p}}\rc_\rp v\rc_\rp+\lambda^{2}\lc q|_{\lc u\rc_{\rm p}}v\rc_\rp\\
  &&-\lambda \lc q|_{\lc u\rc_{\rm p}}\lc v\rc_{\rm p}\rc_\rp-\lambda \lc \lc q|_{\lc u\rc_{\rm p}}\rc_\rp v\rc_\rp-\lambda^{2}\lc q|_{\lc u\rc_{\rm p}}v\rc_\rp\\
  &\equiv& 0\; \mod(S_{\rm rb},\omega).
\end{eqnarray*}
Thus $f,g)_\omega^{q'}$ is trivial modulo $(S_{\rm rb},\omega)$.
\noindent

\smallskip
(\mref{it:frb})
By Proposition~\mref{prop:drb}~(\mref{it:fqrb}), $\bfk\frakc(Y)/\Id(S_{\rm rb})$ is the  free  commutative quasi-idempotent Rota-Baxter algebra on $Y$. Then by Item~(\mref{it:gsrb}) and Theorem~\mref{thm:cdl}~(\mref{it:gsb}),  $\Irr(S_{\rm rb})$ is a linear basis of $\bfk\frakc(Y)/\Id(S_{\rm rb})$.
\end{proof}

\subsection{Linear bases of free commutative quasi-idempotent differential Rota-Baxter algebras}
This section will develop Gr\"obner-Shirshov bases for free commutative quasi-idempotent differential Rota-Baxter algebra. For this, we take $\Omega:=\{\lc\,\rc_{\rm d}, \lc\,\rc_{\rm P}\}$. Now we are ready for the main theorem of this paper.
\begin{theorem}Let $Y$ be a well-ordered set. Let
$$ S_{\rm drb}:=S_{\rm d}\cup S_{\rm rb}\cup \Big\{\,\phi_5(u,v):=\lc \lc u\rc_{\rm p}\rc_\rd-u\,|\,u,v\in \frakc(Y)\Big\}.
$$
where $S_{\rm d}$ and $S_{\rm rb}$ be as in Theorem~\mref{thm:fcqd} and Theorem~\mref{thm:fcqrb}, respectively. With the monomial order $\leq_{\db}$ defined in Eq.~(\mref{eq:mord}),
\begin{enumerate}
\item $S_{\rm drb}$ is a Gr\"obner-Shirshov basis in $\bfk\frakc(Y)$.\mlabel{it:gsdrb}
\item $$\Irr(S_{\rm drb}):=\frakc(Y)\Big\backslash\,
\left\{\begin{array}{lll}q_1|_{\lc u\rc_{\rm p}\lc v\rc_{\rm p}}, \,q_2|_{\lc u\rc^2_\rp},\,q_3|_{\lc u\rc_{\rm d}\lc v\rc_{\rm d}},\\
q_4|_{\lc u\rc^2_\rd},\,q_5|_{\lc \lc u\rc_{\rm p}\rc_\rd}
 \end{array}\bigg|q_i\in\frakc^\star(Y),1\leq i\leq 5,u,v\in \frakc(Y)\right\}.$$
is a linear basis of the free  commutative quasi-idempotent differential Rota-Baxter algebra on $Y$.\mlabel{it:fdrb}
\end{enumerate}
\mlabel{thm:drb}
\end{theorem}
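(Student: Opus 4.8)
The plan is to apply the Composition-Diamond Lemma (Theorem~\mref{thm:cdl}): for Part~(\mref{it:gsdrb}) I would show that every composition of elements of $S_{\rm drb}$ is trivial modulo $(S_{\rm drb},\omega)$, and for Part~(\mref{it:fdrb}) I would combine this with Proposition~\mref{prop:drb}~(\mref{it:fqdrb}), which identifies $\bfk\frakc(Y)/\Id(S_{\rm drb})$ with the free commutative quasi-idempotent differential Rota-Baxter algebra on $Y$, so that $\Irr(S_{\rm drb})$ becomes its linear basis. Since $S_{\rm drb}=S_{\rm d}\cup S_{\rm rb}\cup\{\phi_5\}$ and the self-compositions inside $S_{\rm d}$ and inside $S_{\rm rb}$ were already verified in Theorem~\mref{thm:d} and Theorem~\mref{thm:rb}, the work reduces to the \emph{new} compositions: those of $\phi_5$ with each $\phi_i$ (including $\phi_5$ itself) and the cross compositions between $S_{\rm d}$ and $S_{\rm rb}$. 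As in the previous proofs, I would first record all ambiguities in a $5\times 5$ table and then dispatch them case by case.

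Next I would eliminate the easy families. Because $\bar{\phi_5}=\lc\lc u\rc_{\rm p}\rc_{\rm d}$ has $\Omega$-breadth $1$, the inequality $\max\{|\bar f|,|\bar g|\}<|\omega|<|\bar f|+|\bar g|$ defining an intersection composition has no integer solution once one of the breadths equals $1$; hence \emph{no} intersection composition involves $\phi_5$. Likewise the leading monomials $\lc u\rc_{\rm d}\lc v\rc_{\rm d}$ and $\lc z\rc_{\rm p}\lc w\rc_{\rm p}$ share no prime factor, since $\lc\,\rc_{\rm d}$ and $\lc\,\rc_{\rm p}$ are distinct operators, so there is no intersection composition between $S_{\rm d}$ and $S_{\rm rb}$ either. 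Thus every new composition is an including one. Those including compositions in which $\bar g$ sits strictly inside an operator-argument of $\bar f$ (for instance an occurrence of $\phi_3$ buried inside an argument of $\phi_1$, or a deeply nested occurrence of $\phi_5$) reduce to zero by rewriting the two copies in parallel, exactly as in the singleton cases treated in Theorem~\mref{thm:d} and Theorem~\mref{thm:rb}.

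The heart of the matter --- and the step I expect to be the main obstacle --- is the family of \emph{surface} including compositions, in which the outer operator $\lc\,\rc_{\rm d}$ of $\bar{\phi_5}$ meets a quasi-idempotency or Leibniz relation directly: $\phi_5$ against $\phi_2$ (ambiguity $\omega=\lc\lc\lc u\rc_{\rm p}\rc_{\rm d}\rc_{\rm d}$), against $\phi_4$ (ambiguity $\omega=\lc\lc\lc u\rc_{\rm p}\rc_{\rm p}\rc_{\rm d}$), and against $\phi_1$ and $\phi_3$. These are precisely the compositions encoding the compatibility of the section identity $d\circ P=\id$ with $d^2=-\lambda^{-1}d$ and $P^2=-\lambda P$. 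For example, reducing the $(\phi_2,\phi_5)$-ambiguity along its two paths (applying $\phi_2$ first versus $\phi_5$ first) yields, after cancellation and one application of $\phi_5$, the element $\lambda^{-1}u+\lc u\rc_{\rm d}$; the composition is trivial modulo $(S_{\rm drb},\omega)$ precisely when this element lies in the $\bfk$-span of the admissible lower terms $q|_s$ with $q|_{\bar s}<\omega$. Establishing this for all surface ambiguities is the crucial and delicate point, and it is where I would concentrate the computation, checking these reductions first and with the greatest care, since they govern whether $S_{\rm drb}$ is already complete or must first be enlarged before the Composition-Diamond Lemma can be invoked. Once all compositions are confirmed trivial, Part~(\mref{it:fdrb}) is immediate from Proposition~\mref{prop:drb}~(\mref{it:fqdrb}) and Theorem~\mref{thm:cdl}~(\mref{it:gsb}).
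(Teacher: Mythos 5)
Your strategy is the paper's strategy (Composition--Diamond plus Proposition~\mref{prop:drb}, with the old self-compositions of $S_{\rm d}$ and $S_{\rm rb}$ quoted from Theorems~\mref{thm:d} and~\mref{thm:rb}), and your elimination of the intersection compositions involving $\phi_5$ or mixing $\lc\,\rc_{\rm d}$ with $\lc\,\rc_{\rm p}$ agrees with what the paper does. The problem is the step you single out as ``the main obstacle'' and then defer: it does not go through, and following your own computation to its end shows that the theorem fails as stated. For the surface ambiguity $\omega=\lc\lc\lc u\rc_{\rm p}\rc_{\rm d}\rc_{\rm d}$, with $f=\phi_2(\lc u\rc_{\rm p},\cdot)$ and $q'=\lc\star\rc_{\rm d}$ one gets, exactly as you predict,
$$(f,\phi_5)^{q'}_{\omega}=\lambda^{-1}\lc\lc u\rc_{\rm p}\rc_{\rm d}+\lc u\rc_{\rm d}\equiv\lambda^{-1}u+\lc u\rc_{\rm d},$$
and this is \emph{not} trivial modulo $(S_{\rm drb},\omega)$. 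The cleanest way to see this is to bypass the triviality test altogether: the identity
$$\phi_2(\lc u\rc_{\rm p},\cdot)-\lc\phi_5(u,\cdot)\rc_{\rm d}-\lambda^{-1}\phi_5(u,\cdot)=\lc u\rc_{\rm d}+\lambda^{-1}u$$
shows $\lc y\rc_{\rm d}+\lambda^{-1}y\in\Id(S_{\rm drb})$ for every $y\in Y$ (operator-theoretically: $d=d^{2}\circ P=-\lambda^{-1}d\circ P=-\lambda^{-1}\id$ in any algebra satisfying $\Phi_{\rm dp}$). Since both $y$ and $\lc y\rc_{\rm d}$ lie in $\Irr(S_{\rm drb})$, we get $0\neq\lc y\rc_{\rm d}+\lambda^{-1}y\in\bfk\Irr(S_{\rm drb})\cap\Id(S_{\rm drb})$, which contradicts condition~(\mref{it:gsb}) of Theorem~\mref{thm:cdl}; by the equivalence in that theorem, $S_{\rm drb}$ is not a Gr\"obner--Shirshov basis and $\Irr(S_{\rm drb})$ is not a linear basis of the quotient. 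The same phenomenon recurs for the surface overlaps of $\phi_5$ with $\phi_1$ and $\phi_4$ (the latter forces $\lc u\rc_{\rm p}+\lambda u\in\Id(S_{\rm drb})$, i.e.\ $P=-\lambda\id$), so the set would have to be enlarged by $\lc u\rc_{\rm d}+\lambda^{-1}u$ and $\lc u\rc_{\rm p}+\lambda u$, after which the free object collapses to the polynomial algebra with both operators acting as scalars.

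For comparison with the source: the paper's proof never meets this obstruction because its ambiguity table records for $2\wedge 5$ only $\lc q|_{\lc\lc u\rc_{\rm p}\rc_{\rm d}}\rc^2_{\rm d}$, i.e.\ occurrences of $\overline{\phi_5}$ strictly inside the argument $z$ of $\lc z\rc^2_{\rm d}$, and omits the overlap in which $\overline{\phi_5}$ \emph{is} the inner $\lc z\rc_{\rm d}$ (namely $z=\lc u\rc_{\rm p}$, $\omega=\lc\lc\lc u\rc_{\rm p}\rc_{\rm d}\rc_{\rm d}$); the fourteen unverified cases are then declared ``similar'' to $1\wedge3$ and $2\wedge3$. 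So your plan is, if anything, more careful than the published argument, but as a proof it is incomplete in exactly the place where no completion exists: the deferred verification is not a routine computation to be filled in, it is a genuine failure of closure under composition, and it refutes both parts of the statement rather than confirming them.
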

\begin{proof}
(\mref{it:gsdrb}) Denote by $i\wedge j$ the composition of $\Omega$-polynomials of types $\phi_i(u,v), \phi_j(u,v)\in S_{\rm drb}$ for $i,j=1,2,3,4,5$.
All ambiguities of  compositions of commutative $\Omega$-polynomials in $S$ are given in the following table.
  \begin{table}[!htbp]
\centering
\begin{tabular}{|c|c|c|c|c|c|}
\hline
\diagbox{i}{$i\wedge j$}{j}&1&2&3&4&5\\
\hline
1&\makecell{$\lc u\rc_{\rm d}\lc v\rc_{\rm d}\lc w\rc_{\rm d}$\\
$\lc q|_{\lc u\rc_{\rm d}\lc v\rc_{\rm d}}\rc_\rd\lc w\rc_{\rm d}$}&$\lc q|_{\lc u\rc^2_\rd}\rc_\rd\lc v\rc_{\rm d}$&$\lc q|_{\lc u\rc_{\rm p}\lc v\rc_{\rm p}}\rc_\rd\lc w\rc_{\rm d}$&$\lc q|_{\lc u\rc^2_\rp}\rc_\rd\lc v\rc_{\rm d}$&$\lc q|_{\lc \lc u\rc_{\rm p}\rc_\rd}\rc_\rd\lc v\rc_{\rm d}$\\
\hline
2&$\lc q|_{\lc u\rc_{\rm d}\lc v\rc_{\rm d}}\rc^2_\rd$&$\lc q|_{\lc u\rc^2_\rd}\rc^2_\rd$&$\lc q|_{\lc u\rc_{\rm p}\lc v\rc_{\rm p}}\rc^2_\rd$&$\lc q|_{\lc u\rc^2_\rp}\rc^2_\rd$&$\lc q|_{\lc \lc u\rc_{\rm p}\rc_\rd}\rc^2_\rd$\\
\hline
3&$\lc q|_{\lc u\rc_{\rm d}\lc v\rc_{\rm d}}\rc_\rp\lc w\rc_\rp$&$\lc q|_{\lc u\rc^2_\rd}\rc_\rp\lc v\rc_{\rm p}$& \makecell{$\lc u\rc_{\rm p}\lc v\rc_{\rm p}\lc w\rc_\rp$\\
$\lc q|_{\lc u\rc_{\rm p}\lc v\rc_{\rm p}}\rc_\rp\lc w\rc_\rp$}&$\lc q|_{\lc u\rc^2_\rp}\rc_\rp\lc v\rc_{\rm p}$&$\lc q|_{\lc\lc u\rc_{\rm p}\rc_\rd}\rc_\rp\lc v\rc_{\rm p}$\\
\hline
4&$\lc q|_{\lc u\rc_{\rm d}\lc v\rc_{\rm d}}\rc^2_\rp$&$\lc q|_{\lc u\rc^2_\rd}\rc^2_\rp$&$\lc q|_{\lc u\rc_{\rm p}\lc v\rc_{\rm p}}\rc^2_\rp$&$\lc q|_{\lc u\rc^2_\rd}\rc^2_\rp$&$\lc q|_{\lc \lc u\rc_{\rm p}\rc_\rd}\rc^2_\rp$\\
\hline
5&$\lc \lc q|_{\lc u\rc_{\rm d}\lc v\rc_{\rm d}}\rc_\rp\rc_\rd$&$\lc \lc q|_{\lc u\rc^2_\rd}\rc_\rp\rc_\rd$&$\lc \lc q|_{\lc u\rc_{\rm p}\lc v\rc_{\rm p}}\rc_\rp\rc_\rd$&$\lc \lc q|_{\lc u\rc^2_\rp}\rc_\rp\rc_\rd$&$\lc \lc q|_{\lc \lc u\rc_{\rm p}\rc_\rd}\rc_\rp\rc_\rd$\\
\hline
\end{tabular}
\end{table}
 Firstly, by the proof of Theorem~\mref{thm:d} and Theorem~\mref{thm:rb}, the eight cases: $1\wedge 1$, $1\wedge 2$, $2\wedge 1$, $2\wedge 2$, $3\wedge 3$, $3\wedge 4$, $4\wedge 3$ and $4\wedge 4$, have been proved. By the proof of ~\cite [Theorem~6.1 ]{Qiu14}, the three cases: $3\wedge 5$, $5\wedge 3$ and $5\wedge 5$,  are already validated. We need to check the remaining fourteen cases. We only verify  two cases:$1\wedge3, 2\wedge3$. The others are similar.   Let
\begin{eqnarray*}
&&f_1:=\phi_1(u_1,v_1)=\lc u_1\rc_\rd\lc v_1\rc_\rd+\lambda^{-1}\lc u_1\rc_\rd v_1+\lambda^{-1}u_1\lc v_1\rc_\rd-\lambda^{-1}\lc u_1v_1\rc_\rd\\
&&f_2:=\phi_2(u_2,v_2)=\lc u_2\rc^2_\rd+\lambda^{-1} \lc u_2\rc_\rd\\
&&f_3:=\phi_3(u_3,v_3)=\lc u_3\rc_\rp \lc v_3\rc_\rp-\lc u_3\lc v_3\rc_\rp\rc_\rp-\lc \lc u_3\rc_\rp v_3\rc_\rp-\lambda \lc u_3v_3\rc_\rp\\
&&f_4:=\phi_4(u_4,v_4)=\lc u_4\rc^2_\rp+\lambda \lc u_4\rc_\rp\\
&&f_5:=\phi_5(u_5,v_5)=\lc \lc u_5\rc_\rp \rc_\rd-u_5.
\end{eqnarray*}
By the monomial order $\leq_{\db}$, we obtain $\bar{f_1}=\lc u_1\rc_\rd\lc v_1\rc_\rd$, $\bar{f_2}=\lc u_2\rc^2_\rd$, $\bar{f_3}=\lc u_3\rc_\rp \lc v_3\rc_\rp$, $\bar{f_4}=\lc u_4\rc^2_\rp $ and $\bar{f_5}=\lc \lc u_5\rc_\rp\rc_\rd$.
\noindent

\smallskip
{\bf Case: $1\wedge 3$}.

{\bf(The case of intersection compositions)}. Suppose that $\omega=\bar{f_1}\mu=\nu \bar{f_3}$  with $\max\{\,|\bar{f_1}|,\,|\bar{f_3}|\,\}<|\omega| < |\bar{f_1}|+ |\bar{f_3}|$. Then $|\omega|=3$, and so $\omega=\lc u_1\rc_\rd\lc v_1\rc_\rd\mu=\nu \lc u_3\rc_\rp \lc v_3\rc_\rp$. Thus $\lc v_1\rc_\rd=\lc u_3\rc_\rp$, contradiction.  Thus there are no  intersection compositions of $f_1$ and $f_3$.
\noindent

\smallskip
{\bf(The case of including compositions)}. Rewrite $f_1=\phi_1(z,w)$ and $f_3=\phi_3(u,v)$. Suppose that $\omega=\bar{f_1}= q'|_{\bar{f_3}}$. This gives $\lc z\rc_{\rm d}\lc w\rc_{\rm d}=q'|_{\lc u\rc_{\rm p}\lc v\rc_{\rm p}}$.  Then $q'=\lc q\rc_\rd\lc w\rc_{\rm d}$ or $q'=\lc z\rc_{\rm d}\lc q\rc_\rd$, where $q$ is in $\frakc^\star(Y)$. Thus $z=q|_{\lc u\rc_{\rm p}\lc v\rc_{\rm p}}$  or $w=q|_{\lc u\rc_{\rm p}\lc v\rc_{\rm p}}$. We only check the fist case when $\omega=\lc q|_{\lc u\rc_{\rm p}\lc v\rc_{\rm p}}\rc_\rd\lc w\rc_{\rm d}$ and $q'=\lc q\rc_\rd\lc w\rc_{\rm d}$.
  {\allowdisplaybreaks
    \begin{eqnarray*}
(f_1,f_3)^{q'}_\omega&=&f_1- q'|_{f_3}\\
&=&\lc q|_{\lc u\rc_{\rm p}\lc v\rc_{\rm p}}\rc_\rd \lc w\rc_{\rm d}+\lambda^{-1}\lc q|_{\lc u\rc_{\rm p}\lc v\rc_{\rm p}}\rc_\rd w+\lambda^{-1}q|_{\lc u\rc_{\rm p}\lc v\rc_{\rm p}}\lc w\rc_{\rm d}
  -\lambda^{-1}\lc q|_{\lc u\rc_{\rm p}\lc v\rc_{\rm p}}w\rc_\rd\\
  &&-\lc q|_{\lc u\rc_{\rm p}\lc v\rc_{\rm p}-\lc u\lc v\rc_{\rm p}\rc_\rp- \lc \lc u\rc_{\rm p}v\rc_\rp-\lambda \lc uv\rc_\rp}\rc_\rd\lc w\rc_{\rm d}\\
  &=&\lambda^{-1}\lc q|_{\lc u\rc_{\rm p}\lc v\rc_{\rm p}}\rc_\rd w+\lambda^{-1}q|_{\lc u\rc_{\rm p}\lc v\rc_{\rm p}}\lc w\rc_{\rm d}-\lambda^{-1}\lc q|_{\lc u\rc_{\rm p}\lc v\rc_{\rm p}}w\rc_\rd\\
  &&+\lc q|_{\lc u\lc v\rc_{\rm p}\rc_\rp}\rc_\rd\lc w\rc_{\rm d}+\lc q|_{\lc \lc u\rc_{\rm p}v\rc_\rp}\rc_\rd\lc w\rc_{\rm d}+\lambda \lc q|_{\lc uv\rc_\rp}\rc_\rd\lc w\rc_{\rm d}\\
  &\equiv&\lambda^{-1}\lc q|_{\lc u\lc v\rc_{\rm p}\rc_\rp}\rc_\rd w+\lambda^{-1}\lc q|_{\lc \lc u\rc_{\rm p}v\rc_\rp}\rc_\rd w+\lc q|_{\lc uv\rc_\rp}\rc_\rd w\\
  &&+\lambda^{-1}q|_{\lc u\lc v\rc_{\rm p}\rc_\rp}\lc w\rc_{\rm d}+\lambda^{-1}q|_{\lc \lc u\rc_{\rm p}v\rc_\rp}\lc w\rc_{\rm d}+q|_{\lc uv\rc_\rp}\lc w\rc_{\rm d}\\
  &&-\lambda^{-1}\lc q|_{\lc u\lc v\rc_{\rm p}\rc_\rp}w\rc_\rd-\lambda^{-1}\lc q|_{\lc \lc u\rc_{\rm p}v\rc_\rp}w\rc_\rd-\lc q|_{\lc uv\rc_\rp}w\rc_\rd\\
  &&-\lambda^{-1}\lc q|_{\lc u\lc v\rc_{\rm p}\rc_\rp}\rc_\rd w-\lambda^{-1}q|_{\lc u\lc v\rc_{\rm p}\rc_\rp}\lc w\rc_{\rm d}+\lambda^{-1}\lc q|_{\lc u\lc v\rc_{\rm p}\rc_\rp}w\rc_\rd\\
  &&-\lambda^{-1}\lc q|_{\lc \lc u\rc_{\rm p}v\rc_\rp}\rc_\rd w-\lambda^{-1}q|_{\lc \lc u\rc_{\rm p}v\rc_\rp}\lc w\rc_{\rm d}+\lambda^{-1}\lc q|_{\lc \lc u\rc_{\rm p}v\rc_\rp}w\rc_\rd\\
  &&-\lc q|_{\lc uv\rc_\rp}\rc_\rd w-q|_{\lc uv\rc_\rp}\lc w\rc_{\rm d}+\lc q|_{\lc uv\rc_\rp}w\rc_\rd\\
  &\equiv&0\quad\mod(S_{\rm drb},\omega).
  \end{eqnarray*}}
{\bf Case: $2\wedge 3$}.

{\bf(The case of intersection compositions)}. Suppose that $\omega=\bar{f_2}\mu=\nu \bar{f_3}$  with $\max\{\,|\bar{f_2}|,\,|\bar{f_3}|\,\}<|\omega| < |\bar{f_2}|+ |\bar{f_3}|$. Then $2<|\omega|<3$,  contradiction.  Thus there are no  intersection compositions of $f_2$ and $f_3$.
\noindent

\smallskip
{\bf(The case of including compositions)}. Rewrite $f_2=\phi_2(z,w)$ and $f_3=\phi_3(u,v)$. Suppose that $\omega=\bar{f_2}= q'|_{\bar{f_3}}$. This gives $\lc z\rc^2_\rd=q'|_{\lc u\rc_{\rm p}\lc v\rc_{\rm p}}$.   Thus $q'=\lc q\rc^2_\rd$ and $z=q|_{\lc u\rc_{\rm p}\lc v\rc_{\rm p}}$, where $q$ is in $\frakc^\star(Y)$. Hence $\omega=\lc q|_{\lc u\rc_{\rm p}\lc v\rc_{\rm p}}\rc^2_\rd$.
  \begin{eqnarray*}
(f_2,f_3)^{q'}_\omega&=&f_2-q'|_{f_3}\\
&=&\lc q|_{\lc u\rc_{\rm p}\lc v\rc_{\rm p}}\rc^2_\rd+\lambda^{-1}\lc q|_{\lc u\rc_{\rm p}\lc v\rc_{\rm p}}\rc_\rd
  -\lc q|_{\lc u\rc_{\rm p}\lc v\rc_{\rm p}-\lc u\lc v\rc_{\rm p}\rc_\rp-\lc \lc u\rc_{\rm p}v\rc_\rp-\lambda \lc uv\rc_\rp}\rc^2_\rd\\
  &=&\lambda^{-1}\lc q|_{\lc u\rc_{\rm p}\lc v\rc_{\rm p}}\rc_\rd+\lc q|_{\lc u\lc v\rc_{\rm p}\rc_\rp}\rc^2_\rd+\lc q|_{\lc \lc u\rc_{\rm p}v\rc_\rp}\rc^2_\rd\\
  &\equiv&\lambda^{-1}\lc q|_{\lc u\lc v\rc_{\rm p}\rc_\rp}\rc_\rd+\lambda^{-1}\lc q|_{\lc \lc u\rc_{\rm p}v\rc_\rp}\rc_\rd+\lc q|_{\lc uv\rc_\rp}\rc_\rd\\
  &&-\lambda^{-1}\lc q|_{\lc u\lc v\rc_{\rm p}\rc_\rp}\rc_\rd-\lambda^{-1}\lc q|_{\lc \lc u\rc_{\rm p}v\rc_\rp}\rc_\rd-\lc q|_{\lc uv\rc_\rp}\rc_\rd\\
  &\equiv&0\quad\mod(S_{\rm drb},\omega).
  \end{eqnarray*}
\noindent

\smallskip
(\mref{it:fdrb}) By Proposition~\mref{prop:drb}~(\mref{it:fqdrb}), $\bfk\frakc(Y)/\Id(S_{\rm drb})$ is the free  commutative quasi-idempotent Rota-Baxter algebra on $Y$. Then by Item~(\mref{it:gsrb}) and Theorem~\mref{thm:cdl}~(\mref{it:gsb}),  $\Irr(S_{\rm drb})$ is a linear basis of $\bfk\frakc(Y)/\Id(S_{\rm drb})$.
\end{proof}
\noindent {\bf Acknowledgements}: This work was supported by the National Natural Science Foundation of
China (Grant Nos.~11601199 and 11961031) and  Jiangxi Provincial Natural Science Foundation (Grant No.~20224BAB201003).




\end{document}